\documentclass[12pt]{amsart}

\usepackage{tikz}
\usepackage{amsmath, amsfonts, amssymb}

\usetikzlibrary{shapes.geometric,calc}
\usepackage{bm}
\usepackage{hyperref}
\usepackage{circledsteps}

\usepackage[margin=1in,letterpaper,portrait]{geometry}

\theoremstyle{definition}
\newtheorem{definition}{Definition}[section]
\newtheorem*{remark}{Remark}
\newtheorem{example}[definition]{Example}
\newtheorem{obs}[definition]{Observation}

\theoremstyle{plain}
\newtheorem{theorem}{Theorem}
\newtheorem{proposition}[definition]{Proposition}
\newtheorem{lemma}[definition]{Lemma}
\newtheorem{corollary}[definition]{Corollary}

\DeclareMathOperator{\h}{dr}
\DeclareMathOperator{\pr}{Pr}
\newcommand{\perms}{\widehat{S}_n}
\newcommand{\alg}{\mathcal{C}}
\newcommand{\lb}{\Big(}
\newcommand{\rb}{\Big)}

\begin{document}

\title{The clairvoyant ma\^{i}tre d'}

\author[Acton]{Reed Acton}
\address{Department of Mathematical Sciences, DePaul University, Chicago, IL, USA}
\email{racton1@depaul.edu}

\author[Petersen]{T.~Kyle Petersen}
\address{Department of Mathematical Sciences, DePaul University, Chicago, IL, USA}
\email{t.kyle.petersen@depaul.edu}

\author[Shirman]{Blake Shirman}
\address{Department of Mathematical Sciences, DePaul University, Chicago, IL, USA}
\email{blake.kohrmann@gmail.com}

\author[Tenner]{Bridget Eileen Tenner}
\address{Department of Mathematical Sciences, DePaul University, Chicago, IL, USA}
\email{bridget@math.depaul.edu}


\maketitle

\begin{abstract}
In this paper we study a variant of the Malicious Ma\^{i}tre d' problem. This problem, attributed to computer scientist Rob Pike in Peter Winkler's book \emph{Mathematical Puzzles: A Connoisseur's Collection}, involves seating diners around a circular table with napkins placed between each pair of adjacent settings. The goal of the ma\^{i}tre d' is to seat the diners in a way that maximizes the number of diners who arrive at the table to find the napkins on both the left and right of their place already taken by their neighbors. Previous work described a seating algorithm in which the ma\^{i}tre d' expects to force about 18\% of the diners to be napkinless. In this paper, we show that if the ma\^{i}tre d' learns each diner's preference for the right or left napkin before they are placed at the table, this expectation jumps to nearly $1/3$ (and converges to $1/3$ as the table size gets large). Moreover, our strategy is optimal for every sequence of diners' preferences. 
\end{abstract}

\section{The clairvoyant ma\^{i}tre d'.}

This work is a follow-up to recent work by four of the authors \cite{APST}. That prior work was motivated by a problem from \textit{Mathematical Puzzles: A Connoisseur's Collection}, by Peter Winkler \cite[p.~22]{Winkler}. In that problem, titled ``The Malicious Ma\^{i}tre d','' there is a circular table with $n$ chairs and $n$ napkins---one napkin placed between each pair of consecutive chairs. A group of $n$ people is seated around the table, taking napkins to the left or the right of their seats---by random choice when there is a choice, by necessity when there is no choice---and the ma\^{i}tre d' is trying to seat the diners in such an order so as to maximize the number of diners who find no napkin available for choosing. Such diners are \emph{napkinless}. 

Winkler's book provides solutions to the Malicious Ma\^{i}tre d' problem and a variant in which the diners sit randomly. With random seating, roughly $12\%$ of the diners are napkinless, on average, while a maliciously chosen seating arrangement can force about $14\%$ of diners to be napkinless. (Further study of the random seating problem, also known as ``Conway's napkin problem,'' can be found in \cite{CP, Eriksen, Sudbury}.) In his solution to the original problem, Winkler suggests another version of the puzzle, in which the ma\^{i}tre d' observes which napkin a diner selects after they sit down. In \cite{APST} this is called ``The Adaptive Ma\^{i}tre d','' since although the ma\^{i}tre d' has no foreknowledge of the napkin a diner will prefer, they can wait until after making the observation before they decide where to place subsequent diners. The focus of \cite{APST} was a comparison of two strategies for the adaptive ma\^{i}tre d' puzzle: the \emph{trap setting strategy}, which produces a proportion of about $1/6$ napkinless diners, and the \emph{napkin shunning strategy}, which produces close to $18\%$ napkinless diners.

The thrust of the present paper is to modify the assumptions once more. The ma\^{i}tre d' is not just adaptive, but now they are also clairvoyant. That is, while the newest diner and the ma\^{i}tre d' walk to the table to be seated, the ma\^{i}tre d' receives a vision (if the ma\^{i}tre d' does not possess supernatural powers, they might just ask the diner's preference) that reveals whether this diner will prefer to choose the left napkin or the right napkin. We will refer to this new puzzle as ``The Clairvoyant Ma\^{i}tre d','' and the rest of this paper is devoted to this puzzle.  As in \cite{APST}, preference for either the right or left napkin is assumed to be equal. 

Our main achievement is a strategy that we call \emph{clairvoyant trap setting}, for which we prove the following results.

\begin{theorem}\label{thm:main}
Let $n\geq 3$ and $q=\lfloor n/3 \rfloor$. Under the clairvoyant trap setting strategy, we have the following results.
\begin{enumerate}
\item The probability of $k$ napkinless diners at a table with $n$ seats is $p_{n,k}/2^n$, where:
\[
p_{n,k}=
 \begin{cases}
  4\displaystyle\binom{n}{k} & \mbox{ if $0\leq k < q$,} \\
  2^n-4\displaystyle\sum_{i=0}^{q-1}\binom{n}{i} & \mbox{ if $k=q$,}\\
  0 &\mbox{ otherwise.}
 \end{cases}
\] 
\item The expected number of napkinless diners is
\[
 E_n = q - \frac{1}{2^{n-2}}\sum_{k=0}^{q-1} (q-k)\binom{n}{k},
\]
and $E_n \to q =\lfloor n/3 \rfloor$ as $n\to \infty$.

\item Clairvoyant trap setting is optimal in the sense that, for any sequence of napkin preferences, the strategy maximizes the number of napkinless diners.

\end{enumerate}
\end{theorem}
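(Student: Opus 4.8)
The three parts play different roles: parts (1) and (2) are exact computations that fall out once the deterministic behaviour of clairvoyant trap setting on a fixed preference word $\sigma\in\{L,R\}^n$ is understood, since one then sums the number $N(\sigma)$ of napkinless diners it produces over all $2^n$ words. The substance is part (3), pathwise optimality, and the plan is to prove it by matching upper and lower bounds: for every $\sigma$, no strategy can produce more than $N(\sigma)$ napkinless diners, and clairvoyant trap setting produces exactly $N(\sigma)$.

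The first step is a structural analysis of the final configuration, valid for any strategy. Label the seats cyclically and call a seated diner a \emph{right-reacher} or \emph{left-reacher} according to which adjacent napkin they take; a napkinless diner (a \emph{victim}) reaches neither. I would first observe that a victim at seat $i$ forces the neighbor at $i-1$ to be a right-reacher and the neighbor at $i+1$ to be a left-reacher, both seated before the victim. Two consequences follow at once: no right-reacher is ever immediately followed (clockwise) by a left-reacher, since they would claim the same napkin, and victims lie pairwise at cyclic distance at least $3$. Reading the three reach-types around the circle then forces, when at least one victim is present, the rigid decomposition into repeated blocks $[\to\text{-block}][V][\gets\text{-block}]$, so that the numbers of victims, maximal right-reacher blocks, and maximal left-reacher blocks all coincide, and this common number $k$ satisfies $k\le\lfloor n/3\rfloor$.

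The second step ties the bound to $\sigma$. I would show that the leftmost seat of any maximal right-reacher block must be an $R$-preferring diner and the rightmost seat of any maximal left-reacher block must be an $L$-preferring diner: anyone forced against preference needs an already-claimed napkin, which can only be supplied from inside the same block, pushing the obligation to the block's extreme seat. Hence each victim is charged to a distinct $R$-preferring wall and a distinct $L$-preferring wall, both seated earlier, giving $k\le\min(\#R,\#L)$ alongside $k\le\lfloor n/3\rfloor$. The crucial refinement is temporal: since a victim must arrive after both of its walls, the achievable $k$ is exactly the value of an online matching that scans $\sigma$ in arrival order, pairs $R$- and $L$-preferring diners into walls, and assigns each completed pair a later victim. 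I would package the upper bound as the statement that $N(\sigma)$ equals the maximum number of disjoint triples—one $R$-wall, one $L$-wall, and one strictly later victim—formable while reading $\sigma$ left to right.

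The final step checks that clairvoyant trap setting realizes this optimum. Because the ma\^{i}tre d' may seat diners anywhere, laying out disjoint $\to V\gets$ blocks is never the obstruction once $3k\le n$, so it suffices to show the strategy's online rule—bank opposite-preference diners into a completed wall-pair and spring a waiting pair with the next arrival—performs the optimal online matching; I would verify this by a standard exchange argument showing that springing a banked pair as early as possible never lowers the final count. I expect the main obstacle to be exactly this tight, arrival-order upper bound: the spatial analysis is rigid and clean, but ruling out every competing strategy—especially ones that deliberately force diners against preference or that delay springing traps in hope of a richer later configuration—is where the exchange (or induction) argument must do real work. Once $N(\sigma)$ is identified with this matching value, parts (1) and (2) follow by counting the words with $N(\sigma)=k$, which yields the binomial expression $4\binom{n}{k}$ and hence the stated distribution and expectation.
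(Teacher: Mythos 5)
Your structural reduction is sound and essentially matches the paper's: the block decomposition around a napkinless diner, the identification of the happy ``wall'' diners who arrived earlier, and the resulting characterization of $N(\sigma)$ as the maximum number of disjoint triples $\{a<b<c\}$ with $\sigma_a\neq\sigma_b$ is precisely the paper's bench characterization of $\nu_{\max}$ (Observation~\ref{obs:balance}). But there are two genuine gaps. First, the optimality of the greedy online rule is exactly where the content of part~(3) lies, and you defer it to ``a standard exchange argument.'' The paper does not argue by exchange; it proves a quantitative upper bound on the number of balanced benches by a pigeonhole argument governed by the statistic $h=\max\{\h(\sigma),\h(-\sigma)\}$, the maximum absolute prefix sum (\emph{drift}): if $h\ge q+r+2i-1$ then \emph{every} bench collection has at least $i$ unbalanced benches, and the algorithm is shown to create exactly that many, yielding $\nu_{\max}(\sigma)=\min\{q,\lfloor (n-h)/2\rfloor\}$. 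An exchange argument could plausibly be completed, but note that the true obstruction is a prefix statistic, not the global counts you invoke: your bound $k\le\min(\#R,\#L)$ is correct but far from tight (for $\sigma=(+1)^{n/2}(-1)^{n/2}$ one has $\min(\#R,\#L)=n/2$ yet only $\lfloor n/4\rfloor$ triples are achievable), so any completed argument must recover the prefix-sum bound in some form.

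Second, and more seriously, you assert that parts (1) and (2) ``follow by counting the words with $N(\sigma)=k$, which yields the binomial expression $4\binom{n}{k}$.'' Nothing in your setup makes that count accessible: you have identified $N(\sigma)$ only as the value of a matching problem, and there is no a priori reason its level sets are enumerated by binomial coefficients. The paper obtains this by first proving the closed form $\nu_{\max}(\sigma)=\min\{q,\lfloor(n-h)/2\rfloor\}$ and then proving a separate, nontrivial enumeration result (Theorem~\ref{thm:main lattice paths}): the number of $\{N,E\}$ lattice paths of length $n$ with drift $h$ is $\binom{n}{\lfloor (n-h)/2\rfloor}$, established via an explicit edge-by-edge bijection onto paths in a rectangle. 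That theorem, combined with the observation that $\nu_{\max}(\sigma)=k<q$ forces $h\in\{n-2k-1,n-2k\}$ with a factor of $2$ for the two signs of the drift, is the engine behind $p_{n,k}=4\binom{n}{k}$; it is entirely absent from your proposal. Until you (i) express $N(\sigma)$ as an explicit statistic of the word $\sigma$ and (ii) enumerate preference words by that statistic, parts (1) and (2) remain unproved.
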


Figure~\ref{fig: plots} illustrates, for $3\leq n\leq 100$, the differences in the proportions of napkinless diners for the clairvoyant trap setting strategy versus the non-clairvoyant strategies studied in \cite{APST}.

\begin{figure}
\includegraphics[width=14cm]{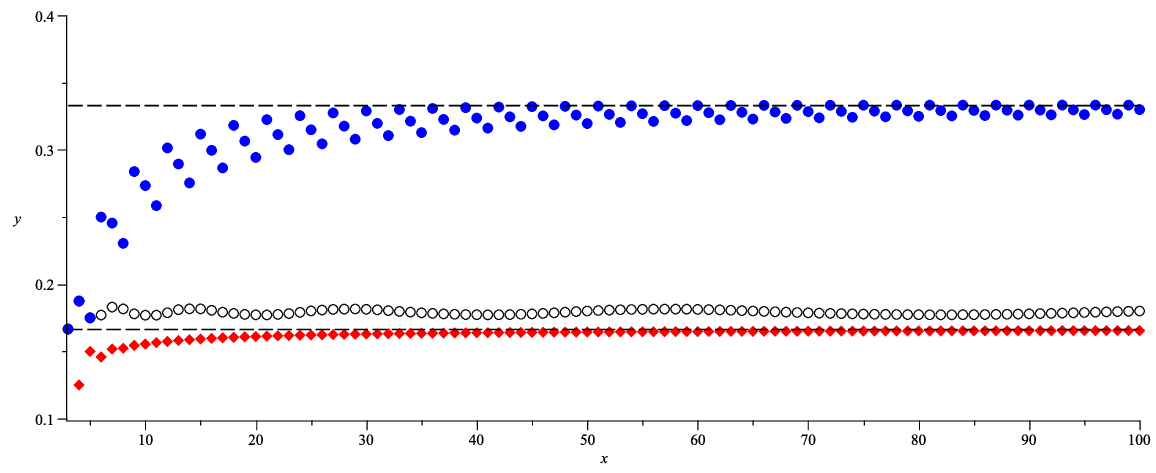}
\caption{The expected proportion of napkinless diners under the trap setting strategy (diamonds), the napkin shunning strategy (open circles), and the clairvoyant trap setting strategy (filled circles). The dashed lines are at heights $1/6$ and $1/3$.}\label{fig: plots}
\end{figure}

In Section~\ref{sec:notation} we establish our notation for the problem so that we can translate the space of potential strategies for the ma\^{i}tre d' into a set of combinatorial objects, and we present preliminary results in Section~\ref{sec:preliminary results}. Having characterized the problem combinatorially, we proceed to identify bounds on the number of napkinless diners, for any strategy, in Section~\ref{sec:bounding}. We study our clairvoyant trap setting algorithm and prove its optimality (Theorem~\ref{thm:main} part (3)) in Section~\ref{sec:clairvoyant}. The probabilistic results in parts (1) and (2) of Theorem~\ref{thm:main} are proved in Section~\ref{sec:distribution}. A key tool in our analysis is a statistic for lattice paths that we call \emph{drift}. This statistic may be of independent interest, and in Section~\ref{sec:drift} we show the number of lattice paths of length $n$ and drift $h$ is the binomial coefficient 
$$\binom{n}{\lfloor (n-h)/2 \rfloor}.$$

\section{Notation and terminology}\label{sec:notation}

Assume throughout this work that there are $n$ diners (and $n$ chairs and $n$ napkins).

The setup of this problem begins with a queue of diners, each of whom has an inherent left/right preference for which napkin they would take when given a choice. The clairvoyant ma\^{i}tre d' learns each diner's preference as they arrive at the front of the queue. We model a \emph{preference order} for the diners by a list
$$\sigma = (\sigma_1, \sigma_2, \ldots, \sigma_n) \in \{\pm1\}^n,$$
where $\sigma_j = -1$ means that the $j$th diner prefers the napkin to their left and $\sigma_j = +1$ means the diner prefers the napkin to their right. We will refer to a diner preferring their left napkin as a \emph{negative} diner, and a diner preferring their right napkin as a \emph{positive} diner. We reiterate that the ma\^{i}tre d' only learns $\sigma_i$ after the first $i-1$ diners have been seated.

We label the diners by their order of arrival at the table (equivalently, by their initial position in the queue to be seated). The dining table is circular, so we label the seats by their counterclockwise position relative to the seat of Diner 1. That is, Diner 1 sits in Seat 1, Seat 2 is to their right, and Seat $n$ is to their left. It will be understood that seat indices are always taken modulo $n$. We write \emph{seating orders} as permutations $w=(w_1,w_2,\ldots,w_n)$, where $w_i = j$ means that Diner $j$ sits in Seat $i$; note that we always have $w_1 = 1$. We use the notation $\perms$ for the set of all such permutations. 

\begin{definition}\label{defn:seating arrangement}
A \emph{seating arrangement} $(w,\sigma)$ for a group of diners consists of a preference order $\sigma$ and a seating order $w$.
We can simultaneously record these pieces of data by writing 
\begin{equation}\label{eqn:seating order notation}
(\sigma'_1 w_1, \sigma'_2 w_2, \ldots, \sigma'_n w_n),
\end{equation}
where $\sigma'_i = \sigma_j$ when $w_i = j$.
\end{definition}

Certainly both $w$ and $\sigma$ can be recovered from Expression~\eqref{eqn:seating order notation}.

For a given seating arrangement, some diners will be able to select their preferred napkins, others will have to take a non-preferred napkin, and others will find no napkins available to them. The people in this last group are the targets of our attention.

\begin{definition}\label{defn:napkinless}
Fix a seating arrangement $(w,\sigma)$. A diner who is seated by the ma\^{i}tre d' in such a way as to find both their nearest left and right napkins already claimed is \emph{napkinless}.
\end{definition}

\begin{center}
\framebox{
\begin{minipage}{4.1in}
\vspace{.1in}
The goal of our clairvoyant ma\^{i}tre d' is to find a seating order so as to maximize the number of napkinless diners.
 \vspace{.05in}
\end{minipage}}
\end{center}

In principle, any permutation $w \in \perms$ could be a seating order used by the ma\^{i}tre d', and any seating strategy used by the ma\^{i}tre d' will result in a seating order that is a permutation. Thus to characterize the outcomes of any strategy is to characterize the seating orders it produces, and claims of optimality for a given strategy can be checked against the outcomes for all seating orders $w\in \perms$.

\begin{example}\label{ex:8 diners}
Suppose there are eight diners, with preference order
$$\sigma = (1, -1, -1, 1, 1, -1, 1, -1),$$
and the ma\^{i}tre d' gives them the seating order 
$$w = (1,5,2,8,4,6,7,3).$$
We can compute the vector $\sigma' = (1, 1, -1, -1, 1, -1, 1, -1)$, and we can represent this entire scenario with the seating arrangement
$$(1,\Circled{5}, -2, -8, 4, -6, \Circled{7}, -3).$$
Equivalently, although less compactly, Figure~\ref{fig:circle} shows how the diners would select their napkins under these circumstances. Diners 5 and 7 (circled in the arrangement above, and sitting in Seats 2 and 7, respectively) arrive at the table to find no napkins available, and hence they are napkinless.
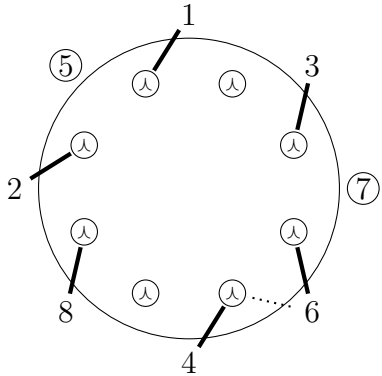
\begin{figure}[htbp]
\[
\begin{tikzpicture}[scale=1,baseline=0]
\draw (0,0) circle (2);
x\node[draw=none,minimum size=5cm,regular polygon,regular polygon sides=8] (a) {};
\node[draw=none,minimum size=3cm,regular polygon,regular polygon sides=8] (c) {};
\foreach \x in {1,2,3,4,5,6,7,8}
  \draw (c.corner \x) node {$\curlywedge$};
\draw[ultra thick] (a.side 1) -- (c.corner 2);
\draw[ultra thick] (a.side 3) -- (c.corner 3);
\draw[ultra thick] (a.side 4) -- (c.corner 4);
\draw[ultra thick] (a.side 5) -- (c.corner 6);
\draw[ultra thick] (a.side 6) -- (c.corner 7);
\draw[ultra thick] (a.side 8) -- (c.corner 8);
\draw[thick, dotted] (a.side 6) -- (c.corner 6);
\foreach \x in {1,2,3,4,5,6,7,8}
  {\fill[white] (c.corner \x) circle (6pt);
  \draw (c.corner \x) circle (5pt);
  \draw (c.corner \x) node {$\scriptstyle{\curlywedge}$};
  \fill[white] (a.side \x) circle (7pt);
  }
\draw (a.side 1) node {$1$};
\draw (a.side 2) node {$5$};
\draw (a.side 3) node {$2$};
\draw (a.side 4) node {$8$};
\draw (a.side 5) node {$4$};
\draw (a.side 6) node {$6$};
\draw (a.side 7) node {$7$};
\draw (a.side 8) node {$3$};
\foreach \x in {2,7} {\draw (a.side \x) circle (6pt);}
\end{tikzpicture}\]
\caption{The seating arrangement $(1,\Circled{5}, -2, -8, 4, -6, \Circled{7}, -3)$, with preferences that lead to two (circled) napkinless diners (and two unclaimed napkins). The thick lines indicate the napkins claimed by Diners $1$, $2$, $3$, $4$, $6$, and $8$, while the dotted line indicates that Diner $6$ had been a negative diner (wanting the napkin on their left), but was forced to take the napkin on their right.}\label{fig:circle}
\end{figure}
\end{example}

\section{Preliminary results}\label{sec:preliminary results}

For a particular seating arrangement $(w,\sigma)$, we can count the napkinless diners produced by $(w,\sigma)$. We let
$$\nu(w,\sigma)$$
denote the number of napkinless diners for the seating arrangement $(w,\sigma)$. For the seating arrangement of Example~\ref{ex:8 diners}, we have $\nu(w,\sigma) = 2$.

As we study $\nu(w,\sigma)$, we begin with the following small result.

\begin{lemma}\label{lem:can achieve nu = 0}
For any preference order $\sigma$, there exists a seating order $w$ for which there are no napkinless diners; that is, for which $\nu(w,\sigma) = 0$. 
\end{lemma}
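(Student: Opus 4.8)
The plan is to prove existence by exhibiting an explicit seating order that makes \emph{every} diner take the napkin on a single fixed side. First I would seat Diner~1 in Seat~1; being first to arrive, this diner always obtains their preferred napkin. If Diner~1 is negative (taking the napkin to their left, between Seats~$n$ and~$1$), I will aim for the configuration in which every diner takes their left napkin; if Diner~1 is positive, I will aim for the symmetric all-right configuration. Since the two cases are mirror images, it suffices to treat the all-left case, which I realize by seating the diners into seats in the order $1, n, n-1, \ldots, 2$, so that the occupied seats always form a contiguous arc that grows to the left.

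The heart of the argument is to maintain the invariant that each diner seated so far has claimed their left napkin, and to show that this invariant is self-propagating. When a new diner is placed at the left end of the current arc, the napkin immediately on their right is the left napkin of the neighbor who was just seated, and by the invariant that napkin has already been claimed. Hence a positive diner is \emph{forced} to take the napkin on their left, while a negative diner takes it by preference. For every diner except the last, the seat on their left is still empty, so the left napkin is genuinely available, and the invariant is preserved. A short induction on the size of the arc makes this precise.

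The step I expect to be the main obstacle is the final diner, who closes the circle: both of their neighbors (Seats~$1$ and~$3$) are already occupied, so a priori they could be napkinless. This is exactly where starting with Diner~1 pays off. The last diner sits in Seat~$2$, and their left napkin is the one between Seats~$1$ and~$2$. Because Diner~1 took their \emph{left} napkin (the one between Seats~$n$ and~$1$), the napkin between Seats~$1$ and~$2$ was never claimed, so it is available; the last diner takes it, just like everyone else, and is not napkinless. Thus the wrap-around closes consistently and $\nu(w,\sigma)=0$.

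As a conceptual check on why a single fixed side is the correct target, one can note that $\nu(w,\sigma)=0$ forces all $n$ napkins to be used (the $n$ diners take $n$ distinct napkins), and a short argument on the cycle of seats shows that the only such configurations are the two global ``rotations''—everyone takes their left napkin, or everyone takes their right. The construction above simply realizes whichever of these two rotations is compatible with the preference of Diner~1.
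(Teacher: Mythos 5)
Your construction is exactly the paper's: seat the diners contiguously in the direction of Diner 1's preference (using $(1,2,\ldots,n)$ or $(1,n,n-1,\ldots,2)$), so that every diner claims the napkin on that one side and the wrap-around napkin next to Diner 1 remains free for the last diner. The argument is correct and matches the paper's proof, just spelled out in more detail.
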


\begin{proof}
If $\sigma_1 = +1$, then $\nu((1,2,\ldots,n),\sigma) = 0$. This is because, by seating the diners sequentially to the right from the first diner, they will each always claim the napkin to their right and no one will be napkinless. If, on the other hand, $\sigma_1 = -1$, then $\nu((1,n,n-1,\ldots, 2),\sigma) = 0$, by a symmetric argument.
\end{proof}

Lemma~\ref{lem:can achieve nu = 0} is encouraging for the diners, but remember: our ma\^{i}tre d' is malicious! They want to find a seating order that creates as many napkinless diners as possible, given their preference order $\sigma$. We write
\[
 \nu_{\max}(\sigma) := \max_{w \in \perms}\{ \nu(w,\sigma)\}.
\]
We can immediately give bounds on $\nu_{\max}(\sigma)$. 

\begin{proposition}\label{prop:bounding numax}
For any preference order $\sigma$ of $n$ diners, we have $0 \leq \nu_{\max}(\sigma) \leq \lfloor n/3 \rfloor$.
\end{proposition}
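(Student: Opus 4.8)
The plan is to dispatch the lower bound immediately and concentrate all effort on the upper bound $\nu_{\max}(\sigma) \leq \lfloor n/3 \rfloor$. Since $\nu(w,\sigma) \geq 0$ for every seating order $w$, we trivially have $\nu_{\max}(\sigma) \geq 0$, and Lemma~\ref{lem:can achieve nu = 0} even exhibits a $w$ achieving equality. For the upper bound I would fix an arbitrary seating arrangement $(w,\sigma)$ and let $N$ denote the set of seats occupied by napkinless diners; the goal is to bound $|N|$ by a purely geometric argument about how these seats can be distributed around the circular table, independent of $\sigma$.

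The central observation is a piece of ``napkin bookkeeping'': each of the $n$ napkins lies between two consecutive seats and can be taken by at most one of the two diners adjacent to it, while a napkinless diner takes no napkin at all. From this I would derive the key lemma that the seats of any two napkinless diners are at cyclic distance at least $3$, in two cases. If two napkinless diners sit in adjacent seats $i$ and $i+1$, then the napkin between them is never claimed, since neither of the two diners takes a napkin; yet the napkinless condition for seat $i$ requires that very napkin to have been claimed before Seat $i$ was seated, a contradiction. If two napkinless diners sit at distance two, in seats $i$ and $i+2$, then the napkin between seats $i$ and $i+1$ and the napkin between seats $i+1$ and $i+2$ must each have been claimed by the diner in seat $i+1$, the only adjacent diner in either case who could have taken them; but a single diner claims only one napkin, which is impossible.

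With the distance lemma in hand, the conclusion follows from a routine packing argument. Listing the seats of the $k = |N|$ napkinless diners in cyclic order, the gaps between consecutive ones sum to $n$ and each gap is at least $3$, so $n \geq 3k$ and hence $k \leq \lfloor n/3 \rfloor$; taking the maximum over $w \in \perms$ yields $\nu_{\max}(\sigma) \leq \lfloor n/3 \rfloor$. The hypothesis $n \geq 3$ is exactly what keeps the bound meaningful in the case $k = 1$.

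I expect the main obstacle to be the distance-two case of the key lemma: the adjacent case and the final counting are essentially immediate, but ruling out napkinless diners two seats apart requires the slightly subtle realization that the single intervening diner would be forced to claim both of their napkins. Once the problem is reframed so that each napkin is a shared resource between two neighbors and napkinless diners consume none, both cases follow from the pigeonhole-type constraint that one diner uses at most one napkin.
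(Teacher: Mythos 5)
Your proof is correct and is essentially the paper's argument in dual form: the paper notes that napkinless diners are in bijection with unused napkins and that among any three consecutive napkins at most one is unused, then packs unused napkins around the cycle, whereas you prove the equivalent statement that napkinless seats are pairwise at cyclic distance at least three and pack those. The decisive step in both versions is the same pigeonhole observation --- the single diner sitting between two would-be napkinless neighbors would have to claim two napkins --- so there is nothing substantively new here, only a change of bookkeeping from napkins to seats.
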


\begin{proof}
The lower bound follows from Lemma~\ref{lem:can achieve nu = 0}. The upper bound follows because napkinless diners are in bijection with unused napkins, and among any three consecutive napkins, at most one of them is unused. Indeed, if a napkin is unused, that is because the diners adjacent to it have selected the napkins to its left and right.
\end{proof}

These bounds are sharp.  Indeed, $\nu_{\max}(1,1,1,\ldots)=0,$ since every diner will arrive to find their rightward napkin available, regardless of the seating order. On the other hand, $\nu_{\max}(1,-1,1,1,-1,1,\ldots) = \lfloor n/3 \rfloor$, since the seating order $(1,3,2,4,6,5,\ldots)$ will yield napkinless diners in the Seats $\{2, 5, 8, 11, \ldots, 3\lfloor n/3 \rfloor - 1\}$.

As we consider the necessary circumstances for a diner to be napkinless, other diners are useful for reference. 

\begin{definition}\label{defn:happy and frustrated}
Following the language of \cite{CP, Eriksen}, we say that a diner is \emph{happy} if they receive the napkin they prefer, and \emph{frustrated} if they receive a napkin, but not their preferred napkin.
\end{definition}

In Example~\ref{ex:8 diners}, Diner $6$ is frustrated.

Suppose Diner $j$ is napkinless. As observed in \cite{CP,Eriksen}, there is necessarily a nearest happy positive diner to the left of Diner $j$ who arrived prior to Diner $j$, say Diner $j'$ with $j'<j$. 
Because Diner $j'$ is the nearest happy diner to the left of Diner $j$, any diners seated physically between them are frustrated because they hoped for their left napkin but were forced to choose the right napkin. In the same fashion, there is a nearest happy negative Diner $j''$ to the right of Diner $j$. Moreover, the frustrated diners must arrive from the ``outside in'' as illustrated in Figure~\ref{fig:frustrated}.

\begin{figure}[htbp]
\[
\begin{tikzpicture}[xscale=1.5]
\draw (0.5,.5)--(7.5,.5);
\draw[ultra thick] (1.5,0)--(2,1);
\draw[ultra thick] (2.5,0)--(3,1);
\draw[ultra thick] (3.5,0)--(4,1);
\draw[ultra thick] (5.5,0)--(5,1);
\draw[ultra thick] (6.5,0)--(6,1);
\draw[thick, dotted] (2.5,0)--(2,1);
\draw[thick, dotted] (3.5,0)--(3,1);
\draw[thick, dotted] (5.5,0)--(6,1);
\draw (1.5,0) node[fill=white,circle,inner sep=2] {$2$};
\draw (2.5,0) node[fill=white,circle,inner sep=2] {$3$};
\draw (3.5,0) node[fill=white,circle,inner sep=2] {$8$};
\draw (4.5,0) node[fill=white,draw=black,circle,inner sep=2] {$9$};
\draw (5.5,0) node[fill=white,circle,inner sep=2] {$7$};
\draw (6.5,0) node[fill=white,circle,inner sep=2] {$5$};
\foreach \x in {1,...,7}{
\draw (\x,1) node[fill=white,draw=black,circle,inner sep=2] {$\scriptstyle{\curlywedge}$};
  }
\end{tikzpicture}
\]
\caption{A portion of a seating arrangement in which Diner 9 is napkinless and Diners 2 and 5 are happy. Diners 3, 8, and 7 are frustrated. The minimal napkinless block for Diner 9 is $B(9) = \{2,9,5\}$.}\label{fig:frustrated}
\end{figure}
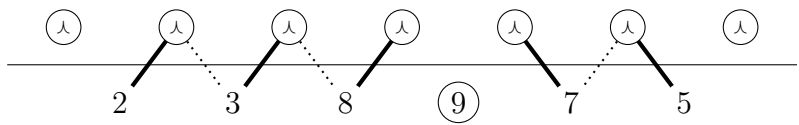

Let us call the set $B(j):= \{j',j,j''\}$ the \emph{minimal napkinless block} for Diner $j$, relative to the seating arrangement $(w,\sigma)$. Notice that if we sit these three diners next to each other at the table, with no other diners physically between them, then Diner $j$ will still be napkinless, and Diners $j'$ and $j''$ will still be happy.

A key insight builds on this notion of a minimal napkinless block to show we can always assume that our napkinless diners are seated as near to each other around the table as possible.

\begin{lemma}\label{lem:knap}
Fix a preference order $\sigma$, with $k:=\nu_{\max}(\sigma)$. There exists a seating order $w \in \perms$ such that the diners in Seats $\{2,5,\ldots, 3k-1\}$ are napkinless. 
\end{lemma}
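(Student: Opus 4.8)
The plan is to start from an optimal seating order and then \emph{repack} its napkinless structure into consecutive triples. Fix a seating order $w^*$ with $\nu(w^*,\sigma)=k$, and list the napkinless diners as $j_1,\ldots,j_k$ in counterclockwise order around the table. By the discussion preceding the lemma, each $j_a$ has a minimal napkinless block $B(j_a)=\{j_a',j_a,j_a''\}$, where $j_a'$ is a happy positive diner to its left, $j_a''$ is a happy negative diner to its right, and both boundary diners arrive before $j_a$ (so $j_a'<j_a$ and $j_a''<j_a$). I would first show that these $k$ blocks are pairwise disjoint, and then exhibit a new seating order in which block $a$ occupies Seats $3a-2,3a-1,3a$, forcing the napkinless diners into Seats $\{2,5,\ldots,3k-1\}$.

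For disjointness --- which I expect to be the main obstacle --- I would use the cyclic arrangement of the $j_a$ (indices taken cyclically). The $k$ napkinless diners cut the circle into $k$ arcs, and I would argue that the arc strictly between consecutive napkinless diners $j_a$ and $j_{a+1}$ contains exactly the right boundary $j_a''$ and the left boundary $j_{a+1}'$: reading rightward from $j_a$ one passes only frustrated positive diners until reaching the first happy diner $j_a''$, and reading leftward from $j_{a+1}$ one passes only frustrated negative diners until reaching $j_{a+1}'$. Since $j_a''$ is negative and $j_{a+1}'$ is positive, they are distinct, and because the two boundaries of any block lie in the two arcs flanking its napkinless diner, distinct blocks contribute boundaries in distinct arcs. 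Hence the $2k$ boundary diners are all distinct; as napkinless diners are neither happy nor repeated, all $3k$ diners of the blocks are distinct.

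With disjointness in hand, I would build the new order $w$ by placing, for each $a$, the triple $(j_a',j_a,j_a'')$ into Seats $(3a-2,3a-1,3a)$ and the remaining $n-3k$ diners arbitrarily into Seats $3k+1,\ldots,n$. To see each $j_a$ is napkinless, I would use that the arrival order is fixed by diner label: when $j_a'$ (positive) arrives it claims the napkin between Seats $3a-2$ and $3a-1$, and when $j_a''$ (negative) arrives it claims the napkin between Seats $3a-1$ and $3a$; both napkins are available at those times because their only other potential claimant is $j_a$, who arrives later (as $j_a'<j_a$ and $j_a''<j_a$). Thus $j_a$ finds both adjacent napkins taken. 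The napkins adjacent to the target seats are contested only within their own block, so the placement of the filler diners in Seats $3k+1,\ldots,n$ cannot interfere.

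Finally, since the $k$ target seats are napkinless we have $\nu(w,\sigma)\ge k$, and the bound $\nu(w,\sigma)\le \nu_{\max}(\sigma)=k$ forces equality, so the napkinless diners are exactly those in Seats $\{2,5,\ldots,3k-1\}$. The one bookkeeping point requiring care is the normalization $w_1=1$: Seat $1$ must host the positive happy left boundary of the first block, which I would secure by letting Diner $1$ --- who arrives first and is therefore always happy --- play that role, assuming as we may (by the left/right symmetry of the problem) that $\sigma_1=+1$.
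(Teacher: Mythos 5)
Your proposal is correct and follows essentially the same route as the paper's proof: take an optimal seating order, extract the minimal napkinless blocks $B(j_a)=\{j_a',j_a,j_a''\}$, and repack them as consecutive triples in Seats $3a-2,3a-1,3a$. The paper states this in three sentences and leaves implicit the two points you elaborate --- pairwise disjointness of the blocks (your arc argument, using that $j_a''$ is negative while $j_{a+1}'$ is positive) and the normalization $w_1=1$ (your substitution of the always-happy Diner $1$ for a left boundary) --- both of which are sound and, if anything, tighten the paper's argument.
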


\begin{proof}
Suppose the diners are seated in such a way that $k$ of them are napkinless. Let $u \in \perms$ denote this seating order.

Label the napkinless Diners $j_1 < j_2 < \cdots < j_k$, and identify their minimal napkinless blocks relative to the seating arrangement $(u,\sigma)$. Let $w$ be the seating order
\[
w=(j_1', j_1, j_1'', j_2', j_2, j_2'',\ldots),
\] 
with all diners not in any $B(j_i)$ included at the end of the seating order (in Seats $3k+1, 3k+2, \ldots, n$). By construction, the seating arrangement $(w,\sigma)$ has napkinless diners in Seats $2, 5, \ldots, 3k-1$. There can be no other napkinless diners because $k$ was maximal.
\end{proof}

We have seen that seating orders with the maximal number of napkinless diners can be constructed from a collection of triples of diners: each consisting of a napkinless diner and the two diners who had already taken the napkinless diner's options. This brings to mind the idea of seating diners on 3-person benches around the table, and so we make the following definition. Note that bench collections, defined below, are independent of any preference order.

\begin{definition}\label{defn:bench collection}
Let $n=3q+r$, with $q = \lfloor n/3\rfloor$. A \emph{bench collection} $\beta = (B_1,\ldots,B_q)$ is a sequence of $q$ disjoint triples (``benches'') such that $B_i = \{a_i < b_i < c_i\} \subseteq [1,n]$ for each $i$. 
\end{definition}

Given a preference order $\sigma \in \{\pm1\}^n$, a bench $B=\{a<b<c\}$ is \emph{balanced} if $\sigma_a+\sigma_b=0$ (i.e., if its two earliest-seated diners have different napkin preferences) and \emph{unbalanced} otherwise. Note that in a balanced bench, we can position those two earliest-seated diners so that the third diner will be seated physically between them -- and with no available napkin. The \emph{balance number} of a bench collection $\beta$, relative to a preference order $\sigma$, is the number of balanced benches that it contains. We denote this by 
\[
 b(\beta,\sigma) := |\{ 1\leq i \leq q : \sigma_{a_i} + \sigma_{b_i} = 0\}|.
\]

From a bench collection $\beta$ and a preference order $\sigma$, we will define a particular seating order.  

\begin{definition}\label{defn:bench seating order}
Fix a preference order $\sigma$ and a bench collection $\beta = (B_1, B_2,\ldots)$. For each bench $B = \{a < b < c\}$, we order its seating via:
$$\widetilde{B} := \begin{cases}
(a, c, b) & \text{if } \sigma_{a} = +1, \text{ and}\\
(b, c, a) & \text{if } \sigma_{a} = -1.
\end{cases}$$
Let $C_1,\ldots, C_{b(\beta,\sigma)}$ be $\{\widetilde{B}_i : B_i \text{ is balanced}\}$, listed with their first components in increasing order, and let $C_{b(\beta,\sigma)+1},\ldots,C_q$ be the ordered unbalanced benches, listed with their first components in increasing order. Let $D$ be an increasing list of any values not included in any bench. Let $v(\beta,\sigma)$ be the permutation defined by the list
$$C_1, C_2, \ldots, C_q, D.$$
This is an element of $S_n$, but it might not have put Diner $1$ into Seat $1$. Define $w(\beta,\sigma) \in \perms$ to be obtained from $v(\beta,\sigma)$ by cycling the positions of the letters of $v(\beta,\sigma)$ until Diner $1$ appears in the first position. 
\end{definition}

Note that the seating order $w(\beta,\sigma)$, determined entirely by $\beta$ and $\sigma$, is naturally associated to the preference order $\sigma$, and so it defines a fixed ``bench seating arrangement'' that we will call
$$(\beta,\sigma) := (w(\beta,\sigma),\sigma).$$

Intuitively, we imagine the seats at the table coming in 3-person benches, possibly with a remainder chair or two. The seating order $w(\beta,\sigma)$ has the property that both ends of a bench are occupied before the middle person sits, and the first person to arrive at the bench is a happy diner who reaches toward the middle of the bench. Moreover, each balanced bench of $\beta$ corresponds to a minimal napkinless block of $(\beta,\sigma)$.
 
\begin{example}\label{ex:14 diners}
Consider $14$ diners with preference order 
$$\sigma = (1,1,-1,1,-1,-1,1,1,1,1,1,1,1,-1).$$
Suppose we have the bench collection
$$\beta = (B_1, B_2, B_3, B_4) = ( \{1, 10, 11\} , \{5, 8, 14\} , \{4,7, 9\}, \{2,6,12\}).$$
Benches $B_2$ and $B_4$ are balanced (because $\sigma_5 + \sigma_8 = 0$ and $\sigma_2 + \sigma_6 = 0$), while benches $B_1$ and $B_3$ are unbalanced, so $b(\beta,\sigma) = 2$. Then 
$$v(\beta,\sigma) = \left(2, 12, 6 \Big| 8, 14, 5 \Big| 1, 11, 10 \Big| 4, 9, 7 \Big| 3, 13\right),$$
where we have used vertical bars to separate the benches. The seating arrangement $(\beta,\sigma)$, obtained by cycling the letters of $v(\beta,\sigma)$ until Diner $1$ appeared in Seat $1$, is
$$\left(1, 11, 10 \Big| 4, \Circled{9}, 7 \Big| -\!3, 13 \Big| 2, \Circled{12}, -6, \Big| 8, -\Circled{14}, -5\right).$$
This arrangement causes Diner $7$ to be frustrated, while Diners $9$, $12$, and $14$ (circled in the list) are all napkinless. Hence $\nu(\beta,\sigma) = 3$.
\end{example}
 
By construction, for any preference order $\sigma$ and any bench collection $\beta$ the number of napkinless diners in the seating arrangement $(\beta, \sigma)$ is at least the number of balanced benches:
\[
 \nu(w,\sigma) \ge b(\beta,\sigma).
\]
Conversely, if a seating order $w$ achieves the maximal number of napkinless diners (that is, if $\nu_{\max}(\sigma) = \nu(w,\sigma)$), then, as exhibited in the proof of Lemma~\ref{lem:knap}, there is a bench collection with $\nu_{\max}(\sigma)$ balanced benches. Taken together, we can characterize $\nu_{\max}(\sigma)$ in terms of bench collections.

\begin{obs}[Bench characterization of napkinless number]\label{obs:balance}
Fix $\sigma \in \{\pm1\}^n$. Then
\[
 \nu_{\max}(\sigma) = \max_{\beta \text{ a bench}\atop \text{collection}}\{ b(\beta,\sigma)\}.
\]
\end{obs}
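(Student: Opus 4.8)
The plan is to prove the identity as a two-sided inequality, with both directions drawing on results already in hand. One direction is immediate from the inequality $\nu(w(\beta,\sigma),\sigma) \ge b(\beta,\sigma)$ recorded just before the statement: for every bench collection $\beta$ we have $b(\beta,\sigma) \le \nu(w(\beta,\sigma),\sigma) \le \nu_{\max}(\sigma)$, and taking the maximum over all $\beta$ gives $\max_\beta b(\beta,\sigma) \le \nu_{\max}(\sigma)$.

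For the reverse inequality, I would set $k := \nu_{\max}(\sigma)$ and invoke Lemma~\ref{lem:knap} (more precisely, its proof) to produce a seating order realizing $k$ napkinless diners together with their minimal napkinless blocks $B(j_1),\ldots,B(j_k)$. The goal is to assemble these into a bench collection whose balance number is at least $k$. First I would check that each block $B(j_i) = \{j_i', j_i, j_i''\}$ is balanced: by construction $j_i'$ is a happy positive diner and $j_i''$ a happy negative diner, and both are seated before the napkinless diner $j_i$ (since $j_i$'s two napkins must already be claimed when it arrives). Hence the two earliest-seated members of the block have preferences $+1$ and $-1$, which sum to zero, so each block is a balanced bench.

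The remaining step is to extend $\{B(j_1),\ldots,B(j_k)\}$ to a full bench collection $\beta$ with exactly $q = \lfloor n/3 \rfloor$ benches. Since $3k \le n$ by Proposition~\ref{prop:bounding numax} and the blocks are disjoint, there are $n - 3k \ge 3(q-k)$ diners not yet assigned, which suffices to form $q-k$ additional disjoint triples (chosen arbitrarily). These extra benches contribute a nonnegative amount to the balance number, so $b(\beta,\sigma) \ge k = \nu_{\max}(\sigma)$, yielding $\max_\beta b(\beta,\sigma) \ge \nu_{\max}(\sigma)$. Combining the two inequalities gives equality.

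The step I expect to require the most care is the claim that the minimal napkinless blocks are pairwise disjoint, i.e.\ that distinct napkinless diners cannot share a bounding happy diner. I would argue this from the ``outside-in'' frustration structure described before the statement: between a happy positive diner and the napkinless diner it bounds on the right, every intermediate diner reaches right, whereas immediately to the right of a napkinless diner every diner reaches left up to its bounding happy negative diner. Thus the reach direction reverses precisely at each napkinless seat, which forces each happy diner to bound at most one napkinless diner. This guarantees disjointness and confirms that the blocks can be packed onto separate benches exactly as in Lemma~\ref{lem:knap}.
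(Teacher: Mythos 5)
Your proposal is correct and follows essentially the same route as the paper: the inequality $\max_\beta b(\beta,\sigma) \le \nu_{\max}(\sigma)$ from the bench seating construction, and the reverse inequality by extracting the minimal napkinless blocks of an optimal seating as in Lemma~\ref{lem:knap} and padding them out to a full bench collection. The extra care you take with balancedness and disjointness of the blocks just makes explicit what the paper leaves implicit in its discussion surrounding Figure~\ref{fig:frustrated}.
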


In other words, we can focus our efforts on maximizing the number of balanced benches in a bench collection. 

\section{Maximizing balanced benches}\label{sec:bounding}

Our first result provides a lower bound on unbalanced benches, phrased in terms of partial sums of a preference order $\sigma$. Before stating the result we introduce some helpful terminology.

First of all, we visualize a preference order $\sigma \in \{\pm1\}^n$ as an $\{N,E\}$ lattice path, by replacing each $+1$ with an ``$N$'' step: $(i,j)\to (i,j+1)$, and each $-1$ with an ``$E$'' step: $(i,j)\to (i+1,j)$. Beginning with $(x_0,y_0)=(0,0)$, a path $p=p(\sigma)$ is a sequence of lattice points: 
\[
p=((x_0,y_0), (x_1,y_1), (x_2,y_2),\ldots, (x_n,y_n)).
\]
With this correspondence, after step $s$, the point $(x_s,y_s)$ has 
\[
x_s=|\{1\leq i\leq s: \sigma_i = -1\}| \quad \mbox{ and } \quad y_s = |\{1\leq i\leq s: \sigma_i = 1\}|,
\]
and $x_s + y_s = s$. We define the \emph{drift} of a path to be 
\[
\h(\sigma) := \max\left( \{0\} \cup \left\{ \sum_{j=1}^i \sigma_j : 1\leq i \leq n\right\}\right) = \max\{ y_i-x_i : 0\leq i \leq n\}.
\]
In other words, $\h(\sigma) = h$ is the maximal $h$ such that the line $y=x+h$ hits a point on the path $p$. 

\begin{example}\label{ex:14 diners path}
Consider the preference order $\sigma = (1,1,-1,1,-1,-1,1,1,1,1,1,1,1,-1)$ from Example~\ref{ex:8 diners}. This corresponds to the path $p(\sigma)$ shown in Figure~\ref{fig:8 diners path}, and the drift $\h(\sigma) = 7$ is achieved at the point $(x_{13},y_{13}) = (3,10)$. 
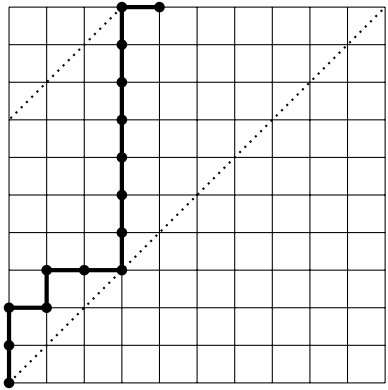
\begin{figure}[htbp]
\begin{tikzpicture}[scale=.5]
\draw (0,0) grid (10,10);
\draw[thick,dotted] (0,0) -- (10,10);
\draw[thick,dotted] (3,10) -- (0,7);
\draw[ultra thick] (0,0) -- (0,2) -- (1,2) -- (1,3) -- (3,3) -- (3,10) -- (4,10);
\foreach \x in {(0,0), (0,1), (0,2), (1,2), (1,3), (2,3), (3,3), (3,4), (3,5), (3,6), (3,7), (3,8), (3,9), (3,10), (4,10)}
	\fill \x circle (4pt);
\end{tikzpicture}
\caption{The lattice path $p(\sigma)$ corresponding to the preference order $\sigma = (1,1,-1,1,-1,-1,1,1,1,1,1,1,1,-1)$. The dotted lines $y=x$ and $y=x+7$ have been drawn to show that this path has drift $7$.}\label{fig:8 diners path}
\end{figure}
\end{example}

The problem of the clairvoyant ma\^{i}tre d' begins with a queue of diners possessing a particular preference order, which gets revealed to the ma\^{i}tre d' one diner at a time. The preference order is the only input to the problem. We can now give bounds for the number of unbalanced benches forced by a particular preference order. Note that we will write ``$-\sigma$'' to indicate changing the sign of each entry of $\sigma$.

\begin{proposition}[Bounds for unbalanced benches]\label{prop:unbalanced}
Fix a positive integer $n$, with $q := \lfloor n/3 \rfloor$, and write $n=3q+r$. Fix $\sigma \in \{\pm1\}^n$, with $h:=\max\{\h(\sigma),\h(-\sigma)\}$. For any nonnegative integer $i$, if $h \ge q + r + 2i - 1$, then any bench collection $\beta$ has at least $i$ unbalanced benches, and thus $b(\beta,\sigma)\leq q-i$. In particular, $\nu_{\max}(\sigma) \leq q-i$. 
\end{proposition}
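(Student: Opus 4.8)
The plan is to convert the drift hypothesis into a statement about a prefix of the diner sequence, and then to bound, bench by bench, how much signed imbalance such a prefix can carry. First I would exploit the symmetry $\sigma \leftrightarrow -\sigma$: since a bench $B=\{a<b<c\}$ is balanced exactly when $\sigma_a+\sigma_b=0$, a condition invariant under negating $\sigma$, every bench collection has the same balanced and unbalanced benches for $\sigma$ as for $-\sigma$. Hence I may assume $h=\h(\sigma)$, and I fix an index $i^*$ (possibly $0$) at which the partial sum attains its maximum, so that $S_{i^*}:=\sum_{j=1}^{i^*}\sigma_j=h$. Writing $P$ and $N$ for the numbers of positive and negative diners among the first $i^*$, this says $P-N=h$.

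Next I would expand $P-N$ as a sum over the benches of $\beta$ together with the at most $r$ diners lying in no bench. For a bench $B=\{a<b<c\}$ its contribution to $P-N$ is $\sum_{x\in B,\ x\le i^*}\sigma_x$, and the crux is a short case analysis on how many of $a,b,c$ lie in the prefix $[1,i^*]$. Because $a<b<c$, a prefix meeting $B$ in one element meets it in $a$, in two elements meets it in $\{a,b\}$, and so on. For a balanced bench the pair $\{a,b\}$ contributes $\sigma_a+\sigma_b=0$, so the total is $0$, $\sigma_a$, $0$, or $\sigma_c$ according to the number of prefix elements, hence at most $1$. For an unbalanced bench one checks directly that the contribution is at most $3$, attained only when all three indices are in the prefix and all positive. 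The leftover diners contribute at most $r$.

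Combining these bounds, if $\beta$ has $u$ unbalanced benches (and thus $q-u$ balanced ones), then
\[
h = P-N \le (q-u)\cdot 1 + u\cdot 3 + r = q + r + 2u.
\]
The hypothesis $h\ge q+r+2i-1$ then forces $2u\ge 2i-1$, and since $u$ is an integer, $u\ge i$. Therefore $b(\beta,\sigma)=q-u\le q-i$, and taking the maximum over all bench collections yields $\nu_{\max}(\sigma)\le q-i$ by Observation~\ref{obs:balance}.

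The one place demanding care is the per-bench bound, specifically verifying that a balanced bench contributes at most $1$ while an unbalanced bench contributes at most $3$; this is exactly where the ordering $a<b<c$ and the cancellation $\sigma_a+\sigma_b=0$ are used, and it is what produces the coefficient $2$ on $u$. The remaining subtlety is the passage from $2u\ge 2i-1$ to $u\ge i$, which is the reason the hypothesis is stated with the odd offset $2i-1$ rather than $2i$.
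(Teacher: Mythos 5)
Your argument is correct, and it takes a genuinely different route from the paper's. The paper proves the same underlying inequality by a greedy pigeonhole argument: it locates a prefix of length $s$ with $y_s - x_s = q+r+2i-1$, distributes those $s$ diners into benches so as to minimize unbalanced benches (first pairing each of the $x_s$ negative diners with two positive diners, then spreading the surplus positive diners one per remaining bench and remainder seat), and counts the $i$ unbalanced benches that are thereby forced. You instead fix an arbitrary bench collection and charge each bench for its contribution to the prefix imbalance $P-N=h$: at most $1$ per balanced bench (because the prefix meets $\{a<b<c\}$ in an initial segment and the cancellation $\sigma_a+\sigma_b=0$ kills the two-element case), at most $3$ per unbalanced bench, and at most $r$ from the diners in no bench, giving $h\le q+r+2u$ for any collection with $u$ unbalanced benches. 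The two approaches yield the same bound, but yours applies verbatim to every bench collection with no implicit claim that a particular greedy distribution is extremal, whereas the paper's phrasing ``to do this with the fewest unbalanced benches'' leaves a small exchange argument to the reader. You also make explicit the reason the reduction to $h=\h(\sigma)$ is harmless --- balance of a bench is invariant under $\sigma\mapsto-\sigma$ --- where the paper simply invokes ``without loss of generality.'' The final parity step, passing from $2u\ge 2i-1$ to $u\ge i$, plays the same role in both proofs.
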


\begin{proof}
The claim about $\nu_{\max}(\sigma)$ will follow from Observation~\ref{obs:balance} if we can establish the claimed lower bound on unbalanced benches.

The proposition is a tautology for $i=0$ (every bench collection has at least 0 unbalanced benches), so let us assume that $i$ is positive.

Suppose $h\geq q+r+2i-1$, and without loss of generality, suppose $h=\h(\sigma)$. Then for some $s\leq n$, we have
$$y_s - x_s = q + r + 2i - 1,$$
and thus
$$y_s = 2x_s + (q-x_s) + r + 2(i-1)+1,$$
where we have regrouped the terms to help conceptualize our pigeonhole-style argument. Notice also that we must have $x_s \leq q$, since otherwise $x_s\geq q+1,$ forcing $s=x_s+y_s\geq 3q+r+2i+1>n;$ but we know that $s\leq n$. In particular, $y_s \geq 2x_s$.

Consider how these first $s$ diners are distributed among the $q$ benches of a bench collection $\beta$. If we exhaust the supply of negative diners (those whose $\sigma$ values are $-1$) by placing each of the $x_s$ of them with two positive diners, this gives us $x_s$ benches that have the potential to be balanced. 

This leaves $(q-x_s) + r+ 2(i-1)+1$ positive diners. Observe that, as we continue to distribute the remaining $s - 3x_s$ of these initial diners, any bench that acquires at least two positive diners will necessarily be unbalanced.

There are $q-x_s$ empty benches and $r$ remainder seats, so we can first place one positive diner per bench and in each of the remainder seats before we are forced to create an unbalanced bench. This leaves $2(i-1)+1$ more positive diners to distribute, among the $q-x_s$ benches that currently possess only one positive diner. To do this with the fewest unbalanced benches, we add two positive diners each to $i-1$ of these benches, and one more positive diner to another bench, creating a total of $i$ unbalanced benches. Any subsequent positioning of the remaining $n-s$ diners can do nothing to remove the $i$ unbalanced benches that were necessary and forced by these initial $s$ diners. 
\end{proof}

\section{Clairvoyant trap setting}\label{sec:clairvoyant}

We now introduce an optimal algorithm for solving the clairvoyant ma\^{i}tre d' problem.

To our previous language of balanced and unbalanced benches in a bench collection, we add the following. In a mild abuse of notation, we say a bench is:
\begin{itemize}
\item \textbf{open} if no diners have been assigned yet, and
\item \textbf{primed} if it has only two diners assigned so far (necessarily in the leftmost and rightmost seats).
\end{itemize} 
Note that previously we defined benches as triples of diners whereas now we also use ``bench'' to refer to the seats in which such diners sit. The usage will be clear from context. 

Remainder seats, when present, are also considered to be primed benches. The clairvoyant trap setting algorithm creates a bench seating arrangement as follows. As before, there is a sequence of $n$ diners with preference order $\sigma \in \{\pm1\}^n$, and we write $n=3q+r$, with $q=\lfloor n/3 \rfloor$. The goal of this algorithm is to put a negative (i.e., left-preferring) diner in the rightmost seat of each bench and a positive diner in the leftmost seat of each bench, for as long as possible. Whenever we cannot take such a step, we next try to fill a primed bench, hopefully creating a napkinless diner. If this, too, is impossible, then we are forced to create an unbalanced bench.

\medskip

\noindent\textsc{Algorithm $\alg$ (clairvoyant trap setting):}
We initialize $q$ empty benches, $B_1,\ldots, B_q$, and $r$ remainder seats. For each $j$, the diners in bench $B_j$ will be placed in Seats $\{3j-2,3j-1,3j\}$ at the table. For each Diner $i=1,\ldots,n$, the host divines their preference $\sigma_i$ and acts as follows. 
\begin{enumerate}
    \item \textbf{If $\bm{\sigma_i = +1}$} then
     \begin{enumerate}
      \item \textbf{if there exists a bench with an unassigned leftmost seat} then the host assigns Diner $i$ to the leftmost seat in the lowest-numbered such bench,
      \item \textbf{else if there exists a primed bench} then the host assigns Diner $i$ to the lowest-numbered such bench (this will be a center seat or remainder seat; make it the leftmost available remainder seat in the latter case),
      \item \textbf{else}  the host assigns Diner $i$ to the rightmost seat in the lowest-numbered bench possible (which necessarily becomes unbalanced). 
     \end{enumerate}
    \item \textbf{Else ($\bm{\sigma_i = -1}$)} 
     \begin{enumerate}
      \item \textbf{if there exists a bench with an unassigned rightmost seat} then the host assigns Diner $i$ to the rightmost seat in the lowest-numbered such bench,
      \item \textbf{else if there exists a primed bench}  then the host assigns Diner $i$ to the lowest-numbered such bench (this will be a center seat or remainder seat; make it the leftmost available remainder seat in the latter case),
      \item \textbf{else} the host assigns Diner $i$ to the leftmost seat in the lowest-numbered bench possible (which necessarily becomes unbalanced).
     \end{enumerate}
\end{enumerate}

Let $\beta_C(\sigma)$ denote the bench seating arrangement produced by Algorithm $\alg$ applied to $\sigma$, after first cycling the list to ensure that Diner $1$ sits in Seat $1$, as was done in Definition~\ref{defn:bench seating order}.

\begin{obs}\label{obs:to reach 1c and 2c in the algorithm}
To deploy step (1c) in Algorithm $\alg$, all leftmost seats have already been assigned, and all remaining unassigned seats are in benches that have both the rightmost and the center seats unassigned. To deploy step (2c) in Algorithm $\alg$, all rightmost seats have already been assigned, and all remaining unassigned seats are in benches that have both the leftmost and the center seats unassigned.  In either case, all remainder seats have already been filled as well. 
\end{obs}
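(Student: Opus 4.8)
The plan is to track, for each of the $q$ benches, which of its three seats (leftmost, center, rightmost) have been assigned at a given stage of Algorithm $\alg$, and to establish a single structural invariant: a bench's center seat is occupied only after both of its outer seats are occupied. Granting this invariant, the two statements in the observation reduce to a short case analysis on which branches of the algorithm have failed.

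First I would record which branches can ever place a diner in a center seat. Scanning the six cases, steps (1a) and (2c) only ever fill a leftmost seat, steps (2a) and (1c) only ever fill a rightmost seat, and remainder seats are handled inside (1b) and (2b). The only steps that assign a diner to the \emph{center} seat of a bench are (1b) and (2b), and by the definition of \emph{primed} these steps act on a bench only when its leftmost and rightmost seats are both already assigned. Consequently, at every stage, any bench whose center seat is filled is in fact complete (all three seats assigned). Equivalently, the only reachable single-bench configurations are: open; leftmost-only; rightmost-only; leftmost-and-rightmost (i.e., primed); and complete. No reachable configuration has the center filled while an outer seat is empty. This invariant is the crux of the argument, and verifying it amounts to checking that no step other than (1b) and (2b) touches a center seat, and that those two steps fire on a bench only when it is already filled at both ends.

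With the invariant in hand, I would analyze the moment step (1c) is deployed. Reaching (1c) means both (1a) and (1b) have failed for the current positive diner. The failure of (1a) says that no bench has an unassigned leftmost seat, so every leftmost seat is already filled; this already gives the first assertion. The failure of (1b) says that there is no primed bench and---since unfilled remainder seats count as primed benches---that all remainder seats are already filled. Now take any incomplete bench. By the failure of (1a) its leftmost seat is filled; by the invariant its center seat is empty (a filled center would force completeness); and its rightmost seat must also be empty, since otherwise the bench would have both outer seats filled and its center empty, i.e., it would be primed, contradicting the failure of (1b). Hence every incomplete bench has both its center and rightmost seats unassigned, which is exactly the claim, and all remainder seats are filled. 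The statement for step (2c) then follows by the left--right symmetry of the algorithm (exchanging positive/negative diners, leftmost/rightmost seats, and the branches (1a)/(1b)/(1c) $\leftrightarrow$ (2a)/(2b)/(2c)).

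The only genuine obstacle is the invariant of the second paragraph; once it is established, the remainder is bookkeeping on the failed branches. I therefore expect the write-up to spend most of its effort making that invariant airtight---in particular confirming that the parenthetical ``center seat or remainder seat'' in (1b) and (2b) never allows a center seat to be filled out of turn, and that the post-processing cycling step (which occurs only after the algorithm terminates) does not affect the configurations arising during execution.
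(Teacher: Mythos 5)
Your proof is correct. The paper states this as an \emph{Observation} and supplies no proof at all, treating it as immediate from the structure of Algorithm $\alg$; your argument---the invariant that a center seat is only ever filled by steps (1b)/(2b) acting on a primed bench, followed by the case analysis of which branches must have failed before (1c) or (2c) fires---is exactly the reasoning the authors leave implicit, so nothing further is needed.
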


We demonstrate Algorithm $\alg$ with our running example.

\begin{example}\label{ex:14 diners algorithm}
Consider, once again, the $14$ diners with preference order 
$$\sigma = (1,1,-1,1,-1,-1,1,1,1,1,1,1,1,-1).$$
Then Algorithm $\alg$ produces the bench seating arrangement
$$\left(1, \Circled{8}, -3 \Big| 2, \Circled{9}, -5 \Big| 4, \Circled{10}, -6 \Big| 7, -14, 13 \Big| 11, 12\right).$$
In this arrangement, Diner $14$ is frustrated, while Diners $8$, $9$, and $10$ (circled) are napkinless.  Thus $\nu(\beta_C(\sigma)) = 3$, and indeed it is impossible to create more napkinless diners from this preference order. 
\end{example}

We now prove some key features of Algorithm $\alg$. 

\begin{lemma}[Unbalanced steps of Algorithm $\alg$]\label{lem:unbalance}
Unbalanced benches are created by Algorithm $\alg$ in steps (1c) and (2c) only. Furthermore:
\begin{itemize}
\item It is impossible for Algorithm $\alg$ to reach both steps (1c) and (2c) for the same preference order $\sigma$. 
\item If step (1c) is used for the $i$th time with Diner $s$, then $y_s = x_s + q + r + 2(i-1)+1$. 
\item If step (2c) is used for the $i$th time with Diner $s$, then $x_s = y_s + q + r + 2(i-1)+1$.
\end{itemize}
\end{lemma}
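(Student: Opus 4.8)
The plan is to prove the three bullet points together by carefully tracking the state of the benches throughout the execution of Algorithm $\alg$. First I would establish the opening claim that unbalanced benches arise \emph{only} in steps (1c) and (2c). This is essentially by inspection of the algorithm: steps (1a) and (2a) place a diner in an empty leftmost or rightmost seat, which can never by itself make a bench unbalanced (it either opens a bench or completes a pair); and steps (1b) and (2b) fill a primed bench's center seat, which leaves the two outer diners' preferences unchanged and hence does not affect balance. Only (1c) and (2c) deliberately violate the ``positive-on-the-left, negative-on-the-right'' discipline by forcing a diner into a seat whose partner already has the same sign, and I would check directly from the seat assignments that this produces two equal-sign diners among the earliest-seated pair, i.e.\ an unbalanced bench.

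Next I would prove the first itemized claim, that (1c) and (2c) cannot both occur for the same $\sigma$. The key observation, which I would lean on Observation~\ref{obs:to reach 1c and 2c in the algorithm} for, is that reaching (1c) requires every leftmost seat to be filled while some rightmost seats remain empty, whereas reaching (2c) requires the symmetric condition. I would argue these are mutually exclusive \emph{as persistent states}: once (1c) is triggered, the positive diners are in surplus in a strong sense (captured quantitatively by the $y_s - x_s$ count below), and this surplus can only grow or be maintained, never reverse into the deficit that (2c) would require. The cleanest way to formalize this is probably to track the signed quantity $y_s - x_s$ and show it stays large once (1c) begins, precluding (2c).

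The heart of the lemma is the two symmetric counting claims, and I would prove the (1c) case, the other following by the $\sigma \leftrightarrow -\sigma$ symmetry. The strategy is a bookkeeping argument: at the moment step (1c) is invoked for the $i$th time with Diner $s$, I want to count exactly how the first $s$ diners have been distributed. By Observation~\ref{obs:to reach 1c and 2c in the algorithm}, at this moment all $q$ leftmost seats are filled and all $r$ remainder seats are filled. I would then account for the diners seated so far: each negative diner has been absorbed into a rightmost seat (the algorithm greedily fills these via step (2a) whenever a negative diner arrives and a rightmost seat is open), so the $x_s$ negative diners occupy $x_s$ rightmost seats; the positive diners fill all $q$ leftmost seats, the $r$ remainder seats, and the center seats of the benches that have been completed via step (1b). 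Setting up the equation $s = x_s + y_s$ and solving for the center-seat count, then relating that count to the number of prior (1c) invocations (each previous (1c) having forced an extra positive diner into a bench, consuming the primed benches two at a time in the pigeonhole sense of Proposition~\ref{prop:unbalanced}), should yield $y_s = x_s + q + r + 2(i-1) + 1$ after simplification.

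The main obstacle I anticipate is the precise accounting in this last step: I must argue rigorously that at the instant (1c) fires for the $i$th time there are no primed benches available (else step (1b) would have been taken instead), and I must correctly count how many center seats have already been filled versus how many benches have been made unbalanced by the previous $i-1$ invocations of (1c). Getting the ``$+2(i-1)+1$'' exactly right requires pinning down that each prior (1c) step used up one of the previously-primed benches (by filling a rightmost seat with a positive diner, turning a primed bench unbalanced and consuming the center opportunity), and I would want to phrase this as an invariant maintained across (1c) invocations rather than re-deriving it each time. This mirrors the pigeonhole computation already carried out in the proof of Proposition~\ref{prop:unbalanced}, so I would structure the argument to parallel that proof, which should make the constant transparent.
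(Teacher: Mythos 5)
Your skeleton matches the paper's: only (1c) and (2c) can break the positive-left/negative-right discipline, mutual exclusivity comes from Observation~\ref{obs:to reach 1c and 2c in the algorithm}, and the displayed identity comes from counting where the first $s$ diners sit. But two of the specific claims you propose to lean on are wrong as stated. First, the assertion that once (1c) fires the surplus $y_s-x_s$ ``can only grow or be maintained'' is false: take $n=9$, $\sigma=(+,+,+,+,-,-,-,-,-)$; step (1c) fires at Diner $4$ with $y_4-x_4=4$, and then the five negative diners are absorbed by (2a) and (2b), ending with $y_9-x_9=-1$. The surplus argument therefore cannot preclude (2c). What does preclude it is the structural fact you mention first and should keep: reaching (1c) requires every leftmost seat to be occupied, seats never empty out, and (2c) needs a bench with an unassigned leftmost seat (and symmetrically). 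That is exactly the paper's use of the Observation; drop the monotone-surplus formalization.

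Second, the bookkeeping behind $q+r+2(i-1)+1$ is off. You list the positive diners as filling the $q$ leftmost seats, the $r$ remainder seats, and the centers of benches completed via (1b), which omits the $i-1$ positive diners sitting in \emph{rightmost} seats placed by the earlier (1c) invocations; and your description of a (1c) step as ``turning a primed bench unbalanced'' is inverted --- (1c) acts on a bench whose rightmost and center seats are both empty and \emph{creates} a new primed (unbalanced) bench, it does not consume one. The correct global count at the instant Diner $s$ triggers the $i$th (1c) is: no primed bench survives, so by the Observation every bench with an occupied rightmost seat also has an occupied center; the occupied rightmost seats are the $x_{s-1}$ negative diners from (2a) plus $i-1$ positive diners from prior (1c) steps, and each of those $x_{s-1}+(i-1)$ centers holds a positive diner from (1b). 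Hence $y_{s-1}=q+r+(i-1)+\bigl(x_{s-1}+(i-1)\bigr)$, and seating the positive Diner $s$ gives $y_s=x_s+q+r+2(i-1)+1$. Each prior (1c) thus contributes \emph{two} positive diners (one rightmost, one center), which is where the $2(i-1)$ comes from. With these repairs your single global count is a legitimate alternative to the paper's argument, which instead verifies the case $i=1$ and then shows inductively that $y-x$ increases by exactly $2$ between consecutive (1c) steps.
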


\begin{proof}
That we cannot reach both (1c) and (2c) follows from Observation~\ref{obs:to reach 1c and 2c in the algorithm}. We now prove the claimed equation for (1c). The analogous statement for (2c) follows by symmetry. 

Suppose that the first time we deploy step (1c) is while processing Diner $s$.  For one thing, this means $\sigma_s = +1$ and so Diner $s$ is a positive diner. Moreover all leftmost bench seats are assigned. Now recall Observation~\ref{obs:to reach 1c and 2c in the algorithm}, and observe that we must be about to create a primed bench by assigning Diner $s$ to its rightmost seat. This means we have not used step (2b) yet, because step (2b) cannot occur until all rightmost seats are full. In particular there are no benches with two negative diners, and no negative diner can be in a remainder seat. Put another way, each bench and remainder seat has at least one positive diner (which accounts for $q+r$ of the positive diners), and there are $x_{s-1}$ benches containing one negative diner and two positive diners. Thus, $y_{s-1} = x_{s-1}+q+r$. The sign of $\sigma_s$ means that $x_s = x_{s-1}$ and $y_s = y_{s-1}+1$, and thus $y_s = x_s + q+r+1$, as desired.

Notice that in general, after reaching step (1c), we have some benches with an unassigned rightmost seat and an assigned leftmost seat, and one primed bench. Each new negative diner will follow step (2a) and join 
one of those benches with an unassigned rightmost seat and assigned leftmost seat (until we run out of such benches), whereas new positive diners will join a primed bench with step (1b), unless there are no primed benches and we are forced to apply step (1c) again.

Suppose we next use step (1c) with Diner $t$. Until we reached step (1c) again, every negative diner joined a bench with an unassigned rightmost seat, to make a primed bench. Say this occurred $a$ times before the next instance of step (1c). There was already one primed bench, so there must be $a+1$ corresponding instances of step (1b) before Diner $t$ is seated. Thus $x_{t-1} = x_s +a$ and $y_{t-1} = y_s + a+1$. As Diner $t$ is a positive diner, we have $x_t=x_{t-1}$ and $y_t = y_{t-1}+1$. Therefore $y_t = x_t + q + r + 3$. 

Continuing in this way, we see that for each new deployment of step (1c), the difference between the number of positive diners and the number of negative diners increases by two, and the result follows. 
\end{proof}

We will now prove that, for any preference order $\sigma$, the bench seating arrangement produced by Algorithm $\alg$ achieves $\nu_{\max}(\sigma)$ napkinless diners. 

\begin{proposition}[Unbalanced benches in Algorithm $\alg$]\label{prop:unbalanced2}
Fix a positive integer $n$, with $q = \lfloor n/3 \rfloor$, and write $n=3q+r$. Fix $\sigma \in \{\pm1\}^n$, with $h=\max\{\h(\sigma),\h(-\sigma)\}$. 
\begin{enumerate}
\item If $h\leq q+r$, then $\beta_C(\sigma)$ has no unbalanced benches, and therefore $\nu_{\max}(\sigma)= q$.
\item If $h \in\{ q+r+2i-1, q+r+2i\}$, then $\beta_C(\sigma)$ has exactly $i$ unbalanced benches, and therefore $\nu_{\max}(\sigma) \geq b(\beta_C(\sigma))= q-i = \lfloor (n-h)/2 \rfloor$.
\end{enumerate}
\end{proposition}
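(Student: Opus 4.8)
The plan is to prove both parts by carefully tracking the number of unbalanced benches produced by Algorithm $\alg$ and relating that count to the drift $h$, using Lemma~\ref{lem:unbalance} as the main engine. By Lemma~\ref{lem:unbalance}, every unbalanced bench is created exactly at an instance of step (1c) or step (2c), and these two steps are mutually exclusive for a fixed $\sigma$. So the number of unbalanced benches in $\beta_C(\sigma)$ equals the total number of times step (1c) fires (if that branch is ever used) or the total number of times step (2c) fires (otherwise). The key structural facts I would need are: (a) each firing of (1c) or (2c) produces exactly one new unbalanced bench, and (b) the drift precisely measures how many such firings are forced.

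For part (1), I would argue the contrapositive direction via Lemma~\ref{lem:unbalance}. If step (1c) were ever reached, then by the lemma its first deployment (with some Diner $s$) gives $y_s - x_s = q+r+1$, so $\h(\sigma) \geq q+r+1 > q+r$; symmetrically, reaching (2c) forces $\h(-\sigma) \geq q+r+1$. Hence if $h \leq q+r$, neither (1c) nor (2c) is ever used, so no unbalanced benches are created and every bench is balanced. Then $b(\beta_C(\sigma)) = q$, and since Proposition~\ref{prop:bounding numax} caps $\nu_{\max}(\sigma)$ at $q = \lfloor n/3\rfloor$, Observation~\ref{obs:balance} forces $\nu_{\max}(\sigma) = q$.

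For part (2), suppose $h \in \{q+r+2i-1,\, q+r+2i\}$ and, without loss of generality, $h = \h(\sigma)$ (the $-\sigma$ case follows by the left--right symmetry of the algorithm). I would show $\beta_C(\sigma)$ has exactly $i$ unbalanced benches in two steps. For the lower bound on unbalanced benches, Proposition~\ref{prop:unbalanced} already gives that any bench collection, in particular $\beta_C(\sigma)$, has at least $i$ unbalanced benches whenever $h \geq q+r+2i-1$. For the upper bound, I would use Lemma~\ref{lem:unbalance}: after the $i$th firing of step (1c) with some Diner $s$, the lemma gives $y_s - x_s = q+r+2(i-1)+1 = q+r+2i-1$, and I must show (1c) does \emph{not} fire an $(i+1)$st time. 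An $(i+1)$st firing would, by the same lemma, require a point with $y_t - x_t = q+r+2i+1 > h$, contradicting the definition of $h = \h(\sigma)$ as the maximal such difference. Thus exactly $i$ unbalanced benches are created, giving $b(\beta_C(\sigma)) = q - i$. The final arithmetic identity $q-i = \lfloor (n-h)/2\rfloor$ then follows by substituting $n = 3q+r$ and checking both parities of $h$ in the window $\{q+r+2i-1,\, q+r+2i\}$.

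The main obstacle I anticipate is the upper bound in part (2): translating the drift bound $h \geq q+r+2i-1$ into the exact count of $i$ unbalanced benches, rather than merely at least $i$. The subtlety is that the drift records the maximum of $y_s - x_s$ over the whole path, but I must confirm that the running difference revisits the values that trigger each successive (1c) firing in the order the algorithm processes diners, and that the bound $h$ genuinely forecloses one more firing. Lemma~\ref{lem:unbalance} does the heavy lifting by pinning each (1c) firing to a precise value of $y_s - x_s$, so the argument reduces to comparing these forced values against $h$; the only care needed is verifying that no path can sneak a higher difference than $h$ between firings, which is immediate from the definition of $\h(\sigma)$ as a maximum. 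The parity bookkeeping in the closing identity is routine but must handle both endpoints of the interval $\{q+r+2i-1,\,q+r+2i\}$ consistently.
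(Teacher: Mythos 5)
Your argument is correct and takes essentially the same route as the paper: Lemma~\ref{lem:unbalance} is the engine in both cases, with the same case analysis and the same closing parity arithmetic for $q-i=\lfloor (n-h)/2\rfloor$. The only (minor) difference is that you invoke Proposition~\ref{prop:unbalanced} to certify the lower bound of $i$ unbalanced benches, whereas the paper reads the ``at least $i$ firings of (1c)'' claim directly off Lemma~\ref{lem:unbalance}; your version makes that direction slightly more explicit, which is a reasonable refinement rather than a new approach.
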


\begin{proof}
First suppose $h\leq q+r$. In this scenario, we have $|y_i - x_i | \leq q+r$ for all $i$. By Lemma~\ref{lem:unbalance}, this means we never create an unbalanced bench. Thus $b(\beta_C(\sigma)) = q \leq \nu_{\max}(\sigma) \leq q$. So $\nu_{\max}(\sigma) = q$ as claimed. 

Now suppose $h \in\{ q+r+2i-1, q+r+2i\}$ for some $i\geq 1$ and without loss of generality, suppose $h = \h(\sigma)$. By Lemma~\ref{lem:unbalance}, this means step (1c) of Algorithm $\alg$ was applied to $i$ diners (but not $i+1$ diners), and each of these instances created an unbalanced bench. Therefore by Observation~\ref{obs:balance}, we have $b(\beta_C(\sigma)) = q-i \leq \nu_{\max}(\sigma)$. 

If $h=q+r+2i=n-2q+2i$, then $n-h=2(q-i)$. On the other hand, if $h=q+r+2i-1 = n-2q+2i-1$, then $n-h=2(q-i)+1$. In either case, we have $q-i = \lfloor (n-h)/2\rfloor$.
\end{proof}

By combining Proposition~\ref{prop:unbalanced2} with Proposition~\ref{prop:unbalanced} and Observation~\ref{obs:balance}, we obtain the following corollary, which establishes part (3) of Theorem~\ref{thm:main}.

\begin{corollary}\label{cor:optimum}
Algorithm $\alg$ is optimal. That is, for any preference order $\sigma \in \{\pm1\}^n$, with $h=\max\{\h(\sigma),\h(-\sigma)\}$, we have
\[
\nu_{\max}(\sigma) = b(\beta_C(\sigma)) = \min\{ q, \lfloor (n-h)/2\rfloor \}.
\]
\end{corollary}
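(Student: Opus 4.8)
The plan is to simply sandwich $\nu_{\max}(\sigma)$ between the two bounds already in hand. Proposition~\ref{prop:unbalanced2} shows that Algorithm $\alg$ produces a bench collection achieving $b(\beta_C(\sigma))$ balanced benches, which is a lower bound for $\nu_{\max}(\sigma)$ by Observation~\ref{obs:balance}; Proposition~\ref{prop:unbalanced} caps $\nu_{\max}(\sigma)$ from above by forcing unbalanced benches in \emph{every} bench collection. The whole argument is then a case split on the size of $h$ relative to $q+r$, using the fact that the pairs $\{q+r+2i-1,\,q+r+2i\}$ for $i\ge 1$ partition every integer exceeding $q+r$ without gap or overlap. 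The only genuinely new ingredient is matching these two regimes to the closed form $\min\{q,\lfloor(n-h)/2\rfloor\}$, which reduces to the elementary equivalence $\lfloor (n-h)/2\rfloor \ge q \iff h \le q+r$ (recalling $n=3q+r$).

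First I would handle the case $h \le q+r$. Here Proposition~\ref{prop:unbalanced2}(1) gives that $\beta_C(\sigma)$ has no unbalanced benches, so $b(\beta_C(\sigma)) = q$ and $\nu_{\max}(\sigma)=q$. To match the stated formula, I would note that $h \le q+r$ gives $n-h \ge 2q$, hence $\lfloor(n-h)/2\rfloor \ge q$ and therefore $\min\{q,\lfloor(n-h)/2\rfloor\}=q$. This settles the full chain of equalities in this regime.

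For the complementary case I would write $h\in\{q+r+2i-1,\,q+r+2i\}$ with $i\ge 1$. Proposition~\ref{prop:unbalanced2}(2) then gives $b(\beta_C(\sigma)) = q-i = \lfloor(n-h)/2\rfloor$ (the floor absorbs both parities of the offset), so $\nu_{\max}(\sigma) \ge q-i$. For the reverse inequality I would invoke Proposition~\ref{prop:unbalanced} with the same $i$: since $h \ge q+r+2i-1$, every bench collection has at least $i$ unbalanced benches, whence $\nu_{\max}(\sigma) \le q-i$. Equality follows, and because $i\ge 1$ forces $h > q+r$ and thus $\lfloor(n-h)/2\rfloor = q-i < q$, the minimum selects the branch $\lfloor(n-h)/2\rfloor$, completing the identification $\nu_{\max}(\sigma) = b(\beta_C(\sigma)) = \min\{q,\lfloor(n-h)/2\rfloor\}$.

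I do not anticipate a substantive obstacle here, since all the combinatorial content is carried by the two propositions; this corollary is essentially bookkeeping that fuses them. The one place that warrants explicit care, and the only spot where a reader might hesitate, is verifying that the hypotheses of the two propositions tile the possible values of $h$ exactly and that $\min\{q,\lfloor(n-h)/2\rfloor\}$ picks out the correct branch in each regime. I would therefore state the equivalence $\lfloor(n-h)/2\rfloor \ge q \iff h \le q+r$ outright, so that the boundary between the two cases—and the switch in which term of the minimum is active—is transparent.
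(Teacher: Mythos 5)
Your proposal is correct and is exactly the argument the paper intends: the paper states that the corollary follows ``by combining Proposition~\ref{prop:unbalanced2} with Proposition~\ref{prop:unbalanced} and Observation~\ref{obs:balance}'' without writing out the details, and your case split on $h\le q+r$ versus $h\in\{q+r+2i-1,\,q+r+2i\}$, together with the check that $\lfloor(n-h)/2\rfloor\ge q\iff h\le q+r$, is precisely that combination made explicit.
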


\section{Results for the distribution of maximal napkinless numbers}\label{sec:distribution}

Now that we have characterized the maximal napkinless number for each $\sigma$ in terms of $h(\sigma):= \max\{\h(\sigma), \h(-\sigma)\}$, we turn to computing the probabilities. Denote by $\Pr(n,k)$ the probability that a preference order $\sigma \in \{\pm1\}^n$ has $\nu_{\max}(\sigma) =k$. We suppose that every preference order is equally likely, so that
\[
 \pr(n,k) = \frac{p_{n,k}}{2^n},
\]
where
\[
 p_{n,k} := |\{ \sigma\in \{\pm1\}^n : \nu_{\max}(\sigma) = k\}|.
\]

A direct consequence of Corollary~\ref{cor:optimum} is the following.

\begin{corollary}
Fix $n\geq 1$ and let $q=\lfloor n/3\rfloor$. Then
\[
 p_{n,k} = \begin{cases}
  |\{ \sigma\in \{\pm1\}^n : \lfloor (n-h(\sigma))/2 \rfloor = k\}| & \mbox{ if $0\leq k < q$,} \\
  |\{ \sigma\in \{\pm1\}^n : \lfloor (n-h(\sigma))/2 \rfloor \geq q\}| & \mbox{ if $k=q$.}
 \end{cases}
\]
\end{corollary}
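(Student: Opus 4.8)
The plan is to read the result directly off Corollary~\ref{cor:optimum}, which asserts that for every $\sigma \in \{\pm1\}^n$ we have $\nu_{\max}(\sigma) = \min\{q, \lfloor (n-h(\sigma))/2\rfloor\}$. Since $p_{n,k}$ is by definition the number of $\sigma$ with $\nu_{\max}(\sigma) = k$, the whole task reduces to rewriting the single condition $\min\{q, m(\sigma)\} = k$, where $m(\sigma) := \lfloor (n-h(\sigma))/2\rfloor$, purely in terms of $m(\sigma)$ and then counting. I would first note that the minimum never exceeds $q$, so no $\sigma$ has $\nu_{\max}(\sigma) > q$; this is why the statement carries no case for $k > q$ (consistent with Proposition~\ref{prop:bounding numax}).

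For the range $0 \le k < q$, I would show that $\min\{q, m(\sigma)\} = k$ if and only if $m(\sigma) = k$. Only the forward direction needs a word: if the minimum equals $k < q$, then $q$ cannot be the argument attaining it, so the minimum must be attained by $m(\sigma)$, forcing $m(\sigma) = k$. The converse is immediate, since $m(\sigma) = k < q$ gives $\min\{q, m(\sigma)\} = k$. This produces the first branch of the piecewise formula.

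For $k = q$, I would observe that $\min\{q, m(\sigma)\} = q$ exactly when $m(\sigma) \ge q$, since the minimum equals the cap precisely when the other argument is at least as large; this gives the second branch. The argument is a clean two-case split on the behavior of the minimum, so I expect no genuine obstacle. The only point that rewards care is the boundary at $k = q$, where the governing condition correctly switches from the equality $m(\sigma) = k$ to the inequality $m(\sigma) \ge q$, thereby absorbing all the probability mass that the cap collapses onto the top value.
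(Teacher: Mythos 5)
Your proposal is correct and matches the paper exactly: the paper states this corollary as a ``direct consequence'' of Corollary~\ref{cor:optimum} with no further elaboration, and your case analysis of $\min\{q, \lfloor (n-h(\sigma))/2\rfloor\} = k$ (equality with $m(\sigma)$ when $k < q$, the inequality $m(\sigma) \ge q$ when $k = q$) is precisely the intended deduction.
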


The main result of the next section will be to show precisely how to compute $p_{n,k}$, establishing Theorem~\ref{thm:main} part (1).

\subsection{Counting lattice paths by drift}\label{sec:drift}

Let $P_{n,h}$ denote the set of $\{N,E\}$ lattice paths of length $n$ and drift $\h(\sigma)=h$ beginning at $(0,0)$.   

\begin{theorem}\label{thm:main lattice paths}
For $n\geq h \geq 0$,
\[
 |P_{n,h}| = \binom{n}{\lfloor (n-h)/2 \rfloor}.
\]
\end{theorem}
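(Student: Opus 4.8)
The plan is to pass from preference orders to walks and then invoke the reflection principle. Identify each $\sigma \in \{\pm1\}^n$ with the simple $\pm1$ walk whose partial sums are $S_i := y_i - x_i$ for $0 \le i \le n$ (so $S_0 = 0$); under this correspondence the drift $\h(\sigma)$ is exactly the maximum $\max_{0\le i\le n} S_i \ge 0$, so $P_{n,h}$ is the set of length-$n$ walks whose maximum equals $h$. Write $N(n,k)$ for the number of length-$n$ walks with $S_n = k$. Since such a walk is determined by the positions of its $E$-steps, and $x_n = (n-k)/2$, we have $N(n,k) = \binom{n}{(n-k)/2}$ when $n \equiv k \pmod 2$ and $N(n,k) = 0$ otherwise. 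The target $\binom{n}{\lfloor(n-h)/2\rfloor}$ is then a single value of $N$, so everything reduces to counting walks by their maximum.

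First I would compute $g(n,h)$, the number of walks with $\h(\sigma) \ge h$, for $h \ge 1$. The reflection principle says that for $k \le h$ the walks from $0$ to $k$ that reach level $h$ are in bijection with all walks from $0$ to $2h-k$, obtained by reflecting the walk across $y = h$ after its first visit to $h$. Splitting the max-$\ge h$ walks according to their endpoint $k$: endpoints $k \ge h$ contribute every walk, namely $\sum_{k \ge h} N(n,k)$, while endpoints $k < h$ contribute $N(n, 2h-k)$, which after the substitution $k' = 2h-k$ becomes $\sum_{k' > h} N(n,k')$. Hence
\[
g(n,h) = N(n,h) + 2\sum_{k>h} N(n,k).
\]
For $h = 0$ this formula still holds, now by the symmetry $\sum_{k>0}N(n,k) = \sum_{k<0}N(n,k)$ together with $g(n,0) = 2^n$.

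Next I would extract the exact-$h$ count by telescoping, using $|P_{n,h}| = g(n,h) - g(n,h+1)$. Substituting the closed form for $g$ and cancelling the matching tails collapses the expression to
\[
|P_{n,h}| = N(n,h) + N(n,h+1).
\]
A parity check then finishes the proof: because $h$ and $h+1$ have opposite parities, exactly one of $N(n,h)$ and $N(n,h+1)$ is nonzero. When $n \equiv h \pmod 2$ the surviving term is $N(n,h) = \binom{n}{(n-h)/2}$, and otherwise it is $N(n,h+1) = \binom{n}{(n-h-1)/2}$; in both cases the lower index equals $\lfloor(n-h)/2\rfloor$, giving $|P_{n,h}| = \binom{n}{\lfloor(n-h)/2\rfloor}$.

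The main obstacle is stating and applying the reflection bijection carefully: it is valid only for $h \ge 1$, so the case $h = 0$ must be treated separately via $g(n,0) = 2^n$, and the endpoint-by-endpoint split must avoid double-counting the boundary endpoint $k = h$ (which belongs to the ``$k \ge h$'' bucket only). Once the formula for $g$ is correct, the telescoping cancellation and the parity case analysis are routine bookkeeping, with the zero-valued $N(n,k)$ of the wrong parity harmlessly dropping out of every sum. A more direct combinatorial proof of the intermediate identity $|P_{n,h}| = N(n,h) + N(n,h+1)$ is also possible, but the reflection computation above is the most self-contained.
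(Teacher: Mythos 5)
Your proof is correct, but it takes a genuinely different route from the paper. The paper proves the formula by constructing an explicit bijection $\phi$ from $P_{n,h}$ to the set of monotone lattice paths in an $\lfloor (n-h)/2\rfloor \times (h + \lceil (n-h)/2\rceil)$ grid: it colors the vertices of a path black up to its ``zenith'' (the last point at height $h$), identifies the ``dips'' beyond it, and maps each edge to an edge of the same or reversed orientation according to the coloring, then verifies by an edge-count that exactly $\lfloor(n-h)/2\rfloor$ steps become $E$ steps. You instead translate drift into the running maximum of a $\pm1$ walk and apply the classical reflection principle to count walks with maximum at least $h$, obtaining $g(n,h) = N(n,h) + 2\sum_{k>h}N(n,k)$, telescoping to the clean intermediate identity $|P_{n,h}| = N(n,h) + N(n,h+1)$, and finishing with a parity check. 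Both arguments are sound; the checks you flag (the $h=0$ case of the reflection formula, and keeping the boundary endpoint $k=h$ in only one bucket) are exactly the right ones and are handled correctly. Your approach is shorter and leans on standard ballot-problem machinery, and the identity $|P_{n,h}| = N(n,h)+N(n,h+1)$ is an appealing waypoint; the paper's bijection is more self-contained and gives a direct structural correspondence with grid paths (which is what the authors seem to want, given that their remark already lists a recurrence proof and a parenthesization proof as alternatives --- yours would be a fourth, arguably the most classical, route).
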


We prove Theorem~\ref{thm:main lattice paths} with a bijective argument shortly, but for the moment assume its validity and suppose $h > q+r$. A path with $n=3q+r$ steps cannot be both above $y=x+q+r$ and to the right of $x=y+q+r$, so assume $h=\h(\sigma)>\h(-\sigma)$.  If $h>q+r = n-2q$, then  $\lfloor (n-h)/2 \rfloor < q$ and by Corollary~\ref{cor:optimum} we have $\nu_{\max}(\sigma) = \lfloor (n-h)/2 \rfloor$. Setting $k=\lfloor (n-h)/2 \rfloor$, we deduce $h\in \{ n-2k-1, n-2k\}$. Considering the cases where $\h(-\sigma) > q+r$ as well gives us exactly two more such paths for each $h$. This gives us the amazingly simple formula below:
\[
p_{n,k} = 2|P_{n,n-2k-1}|+2|P_{n,n-2k}| = 4\binom{n}{k}.
\]

For all other $\sigma$, $h(\sigma) \leq q+r$, so $\lfloor (n-h)/2 \rfloor \geq q$. In this case, Corollary~\ref{cor:optimum} implies $\nu_{\max}(\sigma) = q$. Thus we obtain the following corollary which proves Theorem~\ref{thm:main} part (1).

\begin{corollary}
For $n\geq 1$,
\[
p_{n,k}=
 \begin{cases}
  4\displaystyle\binom{n}{k} & \mbox{ if $0\leq k < q$,} \\
  2^n-4\displaystyle\sum_{i=0}^{q-1}\binom{n}{i} & \mbox{ if $k=q$.}
 \end{cases}
\]
\end{corollary}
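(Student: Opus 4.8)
The plan is to read the two cases directly off Corollary~\ref{cor:optimum}, which gives $\nu_{\max}(\sigma) = \min\{q, \lfloor (n-h(\sigma))/2\rfloor\}$ with $h(\sigma) = \max\{\h(\sigma),\h(-\sigma)\}$, and then convert each resulting condition on $h(\sigma)$ into a lattice-path count via Theorem~\ref{thm:main lattice paths}. Recall that $p_{n,k} = |\{\sigma \in \{\pm1\}^n : \nu_{\max}(\sigma) = k\}|$, so everything reduces to counting preference orders by the value of $h(\sigma)$.

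First I would treat the generic case $0 \le k < q$. Since $k < q$, the equality $\nu_{\max}(\sigma) = k$ forces the minimum in Corollary~\ref{cor:optimum} to be attained by the floor term, i.e. $\lfloor (n-h(\sigma))/2\rfloor = k$, which is equivalent to $h(\sigma) \in \{n-2k-1,\, n-2k\}$. A one-line check shows both values exceed $q+r = n-2q$ precisely because $k < q$. The structural fact I would invoke is that a path of length $n = 3q+r$ cannot simultaneously satisfy $\h(\sigma) > q+r$ and $\h(-\sigma) > q+r$ (it cannot rise above $y = x + q+r$ and also reach the region to the right of $x = y + q+r$ within $n$ steps). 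Hence for any $h > q+r$ the sets $\{\sigma : \h(\sigma) = h\}$ and $\{\sigma : \h(-\sigma) = h\}$ are disjoint, and the sign-flip involution $\sigma \mapsto -\sigma$ maps one bijectively onto the other, so the number of $\sigma$ with $h(\sigma) = h$ is exactly $2|P_{n,h}|$.

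Combining these observations gives
\[
p_{n,k} = 2|P_{n,\,n-2k-1}| + 2|P_{n,\,n-2k}|,
\]
and applying Theorem~\ref{thm:main lattice paths} together with the floor simplifications $\lfloor (2k+1)/2\rfloor = \lfloor 2k/2\rfloor = k$ yields $|P_{n,n-2k-1}| = |P_{n,n-2k}| = \binom{n}{k}$, so $p_{n,k} = 4\binom{n}{k}$. For the boundary case $k = q$ I would argue by complementation: Proposition~\ref{prop:bounding numax} guarantees $\nu_{\max}(\sigma) \le q$ for every $\sigma$, so the $2^n$ preference orders split into those with $\nu_{\max}(\sigma) < q$ and those with $\nu_{\max}(\sigma) = q$. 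The first group has size $\sum_{k=0}^{q-1} p_{n,k} = 4\sum_{k=0}^{q-1}\binom{n}{k}$ by the previous case, giving $p_{n,q} = 2^n - 4\sum_{k=0}^{q-1}\binom{n}{k}$.

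The routine parts are the floor computations and the inequality $n-2k-1 > q+r$ for $k < q$, which I would relegate to a single line. The one genuinely substantive step is the factor-of-two count in the second paragraph: I must verify that the disjointness of $\{\h(\sigma)=h\}$ and $\{\h(-\sigma)=h\}$ holds exactly on the large-drift regime $h > q+r$, since it can fail for small $h$ (most visibly at $h=0$, where a path may have both drifts vanish). Confirming that every value $h = n-2k-1, n-2k$ arising for $k < q$ sits strictly above $q+r$ is precisely what makes the doubling valid on the range where it is needed, and what cleanly separates the $4\binom{n}{k}$ formula from the complementary $k = q$ count.
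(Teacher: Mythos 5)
Your proposal is correct and takes essentially the same route as the paper: it reads the two cases off Corollary~\ref{cor:optimum}, converts $k<q$ into $h(\sigma)\in\{n-2k-1,n-2k\}$, justifies the doubling $p_{n,k}=2|P_{n,n-2k-1}|+2|P_{n,n-2k}|$ by the fact that a path of length $n=3q+r$ cannot have both $\h(\sigma)>q+r$ and $\h(-\sigma)>q+r$, applies Theorem~\ref{thm:main lattice paths}, and handles $k=q$ by complementation---and if anything you spell out the sign-flip disjointness more carefully than the paper does. One cosmetic quibble in your final paragraph: at $h=0$ the failure mode is not that both drifts can vanish simultaneously (they cannot, since the first step makes one of them positive), but rather that no $\sigma$ has $h(\sigma)=0$ while $P_{n,0}$ is nonempty; this side remark plays no role in your argument, which correctly verifies the doubling only on the regime $h>q+r$ where it is needed.
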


\begin{proof}[Proof of Theorem~\ref{thm:main lattice paths}]
The technique here will be to exhibit a bijection between the set $P_{n,h}$ and the set of $\{N,E\}$ paths in a $\ell\times (h+k)$ grid, where $k=\lceil (n-h)/2 \rceil$ and $\ell = \lfloor (n-h)/2 \rfloor$. Let $L_{n,h}$ denote the set of north- and east-step paths in this grid. Since these paths have a total of $n=h+k+\ell$ steps, and $\ell$ of them are $E$, we have $|L_{n,h}| = \binom{n}{\ell}$. 

To see our bijection with $P_{n,h}$, we decorate the $\ell\times (h+k)$ grid as follows. First, we color all vertices on or below the line $y=x+h$ black. Vertices strictly above the this line are colored white. The edges now come in three types which we will refer to as \emph{black edges} (two black vertices), \emph{white edges} (two white vertices), and \emph{gray edges} (one black, one white). 
See Figure~\ref{fig:Lnh} for an illustration.

\begin{figure}
\[
\begin{array}{cc}
 \begin{tikzpicture}[scale=.5]
  \foreach \x in {0,...,8}
   {
   \draw (\x,0) -- (\x,10);
   }
  \foreach \y in {0,...,10}
   {
   \draw (0,\y) -- (8,\y);
   }
  \draw[line width=5,color=gray,opacity=.5, cap=round] (0,2)--(0,3)--(1,3)--(1,4)--(2,4)--(2,5)--(3,5)--(3,6)--(4,6)--(4,7)--(5,7)--(5,8)--(6,8)--(6,9)--(7,9)--(7,10)--(8,10);
  \foreach \x in {0,...,7}
  {
  \foreach \y in {\x,...,7}
  {
  \draw (\x,\y+3) node[inner sep=2,circle,fill=white,draw=black] {};
  }
  }
  \foreach \x in {0,...,8}
  {
  \foreach \y in {-2,...,\x}
  {
  \draw (\x,\y+2) node[inner sep=2,circle,fill=black] {};
  }
  }
  \draw[dashed] (-1,1)--(9,11);
 \end{tikzpicture}
 &
 \begin{tikzpicture}[scale=.5]
  \foreach \x in {0,...,6}
   {
   \draw (\x,0) -- (\x,12);
   }
  \foreach \y in {0,...,12}
   {
   \draw (0,\y) -- (6,\y);
   }
  \draw[line width=5,color=gray,opacity=.5, cap=round] (0,5)--(0,6)--(1,6)--(1,7)--(2,7)--(2,8)--(3,8)--(3,9)--(4,9)--(4,10)--(5,10)--(5,11)--(6,11)--(6,12);
  \foreach \x in {0,...,6}
  {
  \foreach \y in {\x,...,6}
  {
  \draw (\x,\y+6) node[inner sep=2,circle,fill=white,draw=black] {};
  }
  }
  \foreach \x in {0,...,6}
  {
  \foreach \y in {-5,...,\x}
  {
  \draw (\x,\y+5) node[inner sep=2,circle,fill=black] {};
  }
  }
  \draw[dashed] (-1,4)--(7,12);
 \end{tikzpicture}
 \\
 (a) & (b)
 \end{array}
\]
\caption{The $\{N,E\}$ lattice path grid for set $L_{n,h}$ with (a) $n=18$, $h=2$, and (b) $n=18$, $h=5$.}\label{fig:Lnh}
\end{figure}
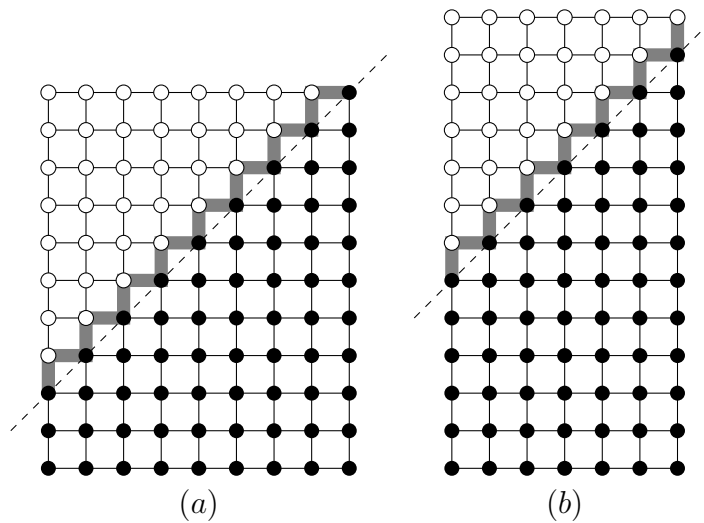

Let $p \in P_{n,h}$. We establish some terminology related to such a path. While a path of drift $h$ may reach that height a number of times, such a path has a rightmost point at which it achieves height $h$. We call this vertex the \emph{zenith} of the path. An east step $E$ is called a \emph{dip} if the height of the path beyond this step never exceeds the height at this step.

We will now color the vertices and edges of $p$ in a way that makes it easier to explain our mapping. First of all, the vertices up to and including the zenith will be colored black (and hence all those edges are black edges). Next, each dip will be colored gray, with one white vertex and one black vertex. (The alternating pattern begins black-white, starting at the zenith.) Since each dip must have one black vertex and one white vertex, any non-dip edges beyond the zenith will occur between consecutive dips, and they are singly-colored, with color determined by the color of the vertices of the nearest dips. 

Here is an example of an $\{N,E\}$ path of length $18$ and height $2$: 
\[
p=NNEENENNEEENEEEENE.
\]
It is drawn in the plane as shown in Figure~\ref{fig:ex}. We can see the zenith occurs after the eighth step, and there are a total of six dips.

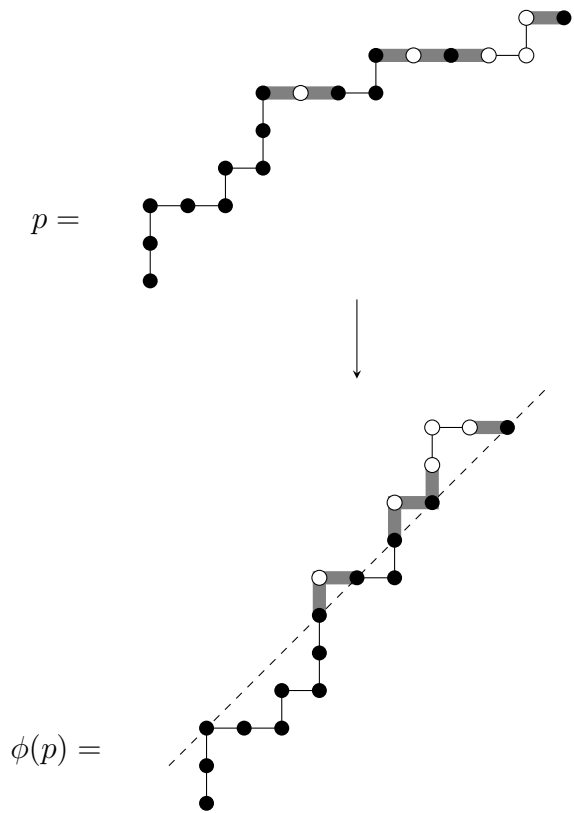
\begin{figure}
\[
\begin{tikzpicture}[>=stealth]
\draw (-4,5) node {$p=$};
\draw (-4,-2) node {$\phi(p)=$};
\draw (0,6) node (p) {
 \begin{tikzpicture}[scale=.5]
  \draw (0,0) node[inner sep=2,circle,fill=black] {}--(0,1) node[inner sep=2,circle,fill=black] {}--(0,2) node[inner sep=2,circle,fill=black] {}--(1,2) node[inner sep=2,circle,fill=black] {}--(2,2) node[inner sep=2,circle,fill=black] {}--(2,3) node[inner sep=2,circle,fill=black] {}--(3,3) node[inner sep=2,circle,fill=black] {}--(3,4) node[inner sep=2,circle,fill=black] {}--(3,5) --(6,5) -- (6,6) -- (10,6) -- (10,7) -- (11,7);
  \draw[line width=5,color=gray,opacity=.5, cap=round] (3,5)--(5,5);
  \draw[line width=5,color=gray,opacity=.5, cap=round] (6,6)--(9,6);
  \draw[line width=5,color=gray,opacity=.5, cap=round] (10,7)--(11,7);
  \draw (4,5) node[inner sep=2,circle,fill=white,draw=black] {};
  \draw (7,6) node[inner sep=2,circle,fill=white,draw=black] {};
  \draw (9,6) node[inner sep=2,circle,fill=white,draw=black] {};
  \draw (10,6) node[inner sep=2,circle,fill=white,draw=black] {};
  \draw (10,7) node[inner sep=2,circle,fill=white,draw=black] {};
  \draw (3,5) node[inner sep=2,circle,fill=black] {};
  \draw (5,5) node[inner sep=2,circle,fill=black] {};
  \draw (6,5) node[inner sep=2,circle,fill=black] {};
  \draw (6,6) node[inner sep=2,circle,fill=black] {};
  \draw (8,6) node[inner sep=2,circle,fill=black] {};
  \draw (11,7) node[inner sep=2,circle,fill=black] {};
 \end{tikzpicture}
 };
 \draw (0,0) node (ph) {
 \begin{tikzpicture}[scale=.5]
  \draw (0,0) node[inner sep=2,circle,fill=black] {}--(0,1) node[inner sep=2,circle,fill=black] {}--(0,2) node[inner sep=2,circle,fill=black] {}--(1,2) node[inner sep=2,circle,fill=black] {}--(2,2) node[inner sep=2,circle,fill=black] {}--(2,3) node[inner sep=2,circle,fill=black] {}--(3,3) node[inner sep=2,circle,fill=black] {}--(3,4) node[inner sep=2,circle,fill=black] {}--(3,6) --(5,6) -- (5,8) -- (6,8) -- (6,10) -- (7,10) -- (8,10);
  \draw[line width=5,color=gray,opacity=.5, cap=round] (3,5)--(3,6)--(4,6);
  \draw[line width=5,color=gray,opacity=.5, cap=round] (5,7)--(5,8)--(6,8)--(6,9);
  \draw[line width=5,color=gray,opacity=.5, cap=round] (7,10)--(8,10);
  \draw (3,6) node[inner sep=2,circle,fill=white,draw=black] {};
  \draw (5,8) node[inner sep=2,circle,fill=white,draw=black] {};
  \draw (6,9) node[inner sep=2,circle,fill=white,draw=black] {};
  \draw (6,10) node[inner sep=2,circle,fill=white,draw=black] {};
  \draw (7,10) node[inner sep=2,circle,fill=white,draw=black] {};
  \draw (3,5) node[inner sep=2,circle,fill=black] {};
  \draw (4,6) node[inner sep=2,circle,fill=black] {};
  \draw (5,6) node[inner sep=2,circle,fill=black] {};
  \draw (5,7) node[inner sep=2,circle,fill=black] {};
  \draw (6,8) node[inner sep=2,circle,fill=black] {};
  \draw (8,10) node[inner sep=2,circle,fill=black] {};
  \draw[dashed] (-1,1)--(9,11);
 \end{tikzpicture}
  };
 \draw[->] (p) -- (ph);
\end{tikzpicture}
\]
\caption{An example of the map $p \to \phi(p)$, for a path with parameters $n=18$ and $h=2$. }\label{fig:ex}
\end{figure}

We now present the bijection $\phi: P_{n,h} \to L_{n,h}$. The map takes edges to edges as indicated in Figure~\ref{fig:phi}, fixing the type of step if it ends in a black vertex, and reversing the orientation of the step if it ends in a white vertex. In Figure~\ref{fig:ex}, we see our example path $p$ transformed into 
\[
\phi(p) = NNEENENNNEENNENNEE.
\]
The edgewise definition of $\phi$ shows that the map is one-to-one and invertible. The trickier thing to see is that map actually takes paths of drift $h$ into the set $L_{n,h}$.

\begin{figure}
\[
\begin{array}{|c || c| c |c |c |c |c|}
\hline
&&&&&& \\
s &
\begin{tikzpicture}
\draw (0,0) node[inner sep=2,circle,fill=black] {}--(1,0) node[inner sep=2,circle,fill=black] {};
\end{tikzpicture}
&
\begin{tikzpicture}[baseline=0.5cm]
\draw[draw=none] (-.5,0) node[inner sep=2,circle] {} -- (.5,0) node[inner sep=2,circle] {};
\draw (0,0) node[inner sep=2,circle,fill=black] {}--(0,1) node[inner sep=2,circle,fill=black] {};
\end{tikzpicture}
& 
\begin{tikzpicture}
\draw (0,0) node[inner sep=2,circle,fill=white,draw=black] {}--(1,0) node[inner sep=2,circle,fill=white,draw=black] {};
\end{tikzpicture}
&
\begin{tikzpicture}[baseline=0.5cm]
\draw (0,0) node[inner sep=2,circle,fill=white,draw=black] {}--(0,1) node[inner sep=2,circle,fill=white,draw=black] {};
\end{tikzpicture}
&
\begin{tikzpicture}
\draw[line width=5,color=gray,opacity=.5, cap=round] (0,0)--(1,0);
\draw (0,0) node[inner sep=2,circle,fill=black] {}--(1,0) node[inner sep=2,circle,fill=white,draw=black] {};
\end{tikzpicture}
&
\begin{tikzpicture}
\draw[line width=5,color=gray,opacity=.5, cap=round] (0,0)--(1,0);
\draw (0,0) node[inner sep=2,circle,fill=white,draw=black] {}--(1,0) node[inner sep=2,circle,fill=black] {};
\end{tikzpicture}
 \\
& &&&&&\\
\hline
& &&&&&\\
\phi(s) 
&
\begin{tikzpicture}
\draw (0,0) node[inner sep=2,circle,fill=black] {}--(1,0) node[inner sep=2,circle,fill=black] {};
\end{tikzpicture}
& 
\begin{tikzpicture}[baseline=0.5cm]
\draw (0,0) node[inner sep=2,circle,fill=black] {}--(0,1) node[inner sep=2,circle,fill=black] {};
\end{tikzpicture}
&
\begin{tikzpicture}[baseline=0.5cm]
\draw (0,0) node[inner sep=2,circle,fill=white,draw=black] {}--(0,1) node[inner sep=2,circle,fill=white,draw=black] {};
\end{tikzpicture}
&
\begin{tikzpicture}
\draw (0,0) node[inner sep=2,circle,fill=white,draw=black] {}--(1,0) node[inner sep=2,circle,fill=white,draw=black] {};
\end{tikzpicture}
&
\begin{tikzpicture}[baseline=0.5cm]
\draw[line width=5,color=gray,opacity=.5, cap=round] (0,0)--(0,1);
\draw (0,0) node[inner sep=2,circle,fill=black] {}--(0,1) node[inner sep=2,circle,fill=white,draw=black] {};
\end{tikzpicture}
&
\begin{tikzpicture}
\draw[line width=5,color=gray,opacity=.5, cap=round] (0,0)--(1,0);
\draw (0,0) node[inner sep=2,circle,fill=white,draw=black] {}--(1,0) node[inner sep=2,circle,fill=black] {};
\end{tikzpicture}
\\
&&&&&&\\
\hline
\end{array}
\]
\caption{Definition of the map $\phi$, depicted in terms of the image of each possible edge in a path.}\label{fig:phi}
\end{figure}

Let $\ell=\lfloor (n-h)/2 \rfloor$, i.e., the width of the grid for paths in $L_{n,h}$. Let $p \in P_{n,h}$. We need to show that exactly $\ell$ steps of $p$ are mapped to $E$ steps in $\phi(p)$. That is, we claim $b+w+g=\ell$, where $b$ is the number of black $E$ steps, $w$ is the number of white $N$ steps, and $g$ is the number of dips that begin with a white vertex.

To reach this conclusion, we first count all types of steps. Let $b(E)$ denote the number of black edges that are $E$ steps, and let $b(N)$ denote the number of black edges that are $N$ steps. Similarly define $w(E)$ and $w(N)$ for the white steps. Since all dips are $E$ steps, we let $g(N)$ denote the number of dips that have their black vertex on the left; $g(E)$ denotes the number of dips that have the white vertex on the left.

For any path $p \in P_{n,h}$, we have: 
\begin{align*}
b(N) &= b(E) + h,\\
w(N) &= w(E),\\
g(N) &= \begin{cases} g(E) & \mbox{ if $n-h$ is even},\\
  g(E)+1 & \mbox{ if $n-h$ is odd.}
  \end{cases}
\end{align*}
Therefore,
\begin{align*}
n&=b(N)+b(E) +w(N)+w(E)+ g(N)+g(E),\\
 &= 2b(E)+h + 2w(N) + 2g(N)+\begin{cases} 0 & \mbox{ if $n-h$ is even},\\
  1 & \mbox{ if $n-h$ is odd,}
  \end{cases}
\end{align*}
and so $\ell=\lfloor (n-h)/2 \rfloor = b(E) + w(N) + g(N)$, as claimed.  
\end{proof}

\begin{remark}[Other proofs of Theorem~\ref{thm:main lattice paths}]
In entry A061554 of \cite{oeis}, there is a comment of Gerald McGarvey that Theorem~\ref{thm:main lattice paths} can be proved using the recurrences $|P_{n,0}|=|P_{n-1,0}|+|P_{n-1,1}|$ and 
\[
 |P_{n,h}| = |P_{n-1,h-1}|+|P_{n-1,h+1}|,
\]
for $h\geq 1$. We leave details to the reader, but the idea is to prepend a step to a path of length $n-1$. If the new step is $N$, then the drift increases by one. If the new step is $E$, then the drift decreases by one (or remains at zero).

Ira Gessel (private communication) has provided a third argument, whereby $\{N,E\}$ paths are interpreted as parenthesizations with $N\to ``)"$ and $E\to ``("$. In this formulation, the drift of a path is the number of unpaired right parentheses, while $y_n-x_n$ is the number of unpaired right parentheses minus the number of unpaired left parentheses. We say the ``ending height'' of a path is $\max\{0,y_n-x_n\}$. For example, the path $p$ in Figure~\ref{fig:ex} becomes
\[
  \rb\rb(()())\lb\lb()\lb\lb\lb()\lb,
\]
where we have enlarged the unmatched parentheses for emphasis. The path has drift 2 and ending height $\max\{0, 2-6\}=0$. 

Notice that all the unpaired right parentheses are necessarily to the left of the unpaired left parentheses. If $p$ has drift $k$, we can form a bijection $p\to \psi(p)$ that amounts to sequentially converting the unpaired left parentheses into unpaired right parentheses, until we have a path with ending height $k$. In general this bijection is different from that used in the proof above. Continuing our example,
\[
\begin{tikzpicture}[>=stealth,yscale=.75]
\draw (0,0) node (a) { \rb\rb(()())\lb\lb()\lb\lb\lb()\lb };
\draw (0,-2) node (b) { \rb\rb(()())\rb\lb()\lb\lb\lb()\lb };
\draw (0,-4) node (c) { \rb\rb(()())\rb\rb()\lb\lb\lb()\lb };
\draw (0,-6) node (d) { \rb\rb(()())\rb\rb()\rb\lb\lb()\lb };
\draw[->] (a)--(b);
\draw[->] (b)--(c);
\draw[->] (c)--(d);
\draw (-2,0) node[left] {$p=$};
\draw (2.5,0) node[right] {$\h(p)=2$};
\draw (-2,-6) node[left] {$\psi(p)=$};
\draw (2.5,-6) node[right] {$\max\{0,y_n-x_n\} = 2$};
\end{tikzpicture}
\]
Note that $\psi(p)\neq \phi(p)$ in this example. We leave further details of this bijection to the reader.
\end{remark}

\subsection{Expected number of napkinless}

The main result of this section is to show that as the table gets large, the clairvoyant ma\^{i}tre d' expects (using Algorithm $\alg$) to get very close to $1/3$ of the diners napkinless on average. The following result will establish part (2) of Theorem~\ref{thm:main}, completing its proof.

\begin{proposition}
For each $n\geq 3$, with $q=\lfloor n/3\rfloor$, the expected number of napkinless diners using Algorithm $\alg$ is:
\[
 q - \frac{1}{2^{n-2}}\sum_{k=0}^{q-1} (q-k)\binom{n}{k}.
\]
In particular, as $n\to \infty$ the expectation converges to $\lfloor n/3 \rfloor$.
\end{proposition}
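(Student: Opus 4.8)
The plan is to read off the distribution from part~(1) of Theorem~\ref{thm:main}, form the expectation directly, and then estimate the resulting correction term. First I would write
\[
E_n = \sum_{k=0}^{q} k\,\frac{p_{n,k}}{2^n},
\]
using that $p_{n,k}=0$ for $k>q$. Substituting $p_{n,k}=4\binom{n}{k}$ for $0\le k<q$ and $p_{n,q}=2^n-4\sum_{i=0}^{q-1}\binom{n}{i}$, and separating off the $k=q$ term, gives
\[
E_n = \frac{4}{2^n}\sum_{k=0}^{q-1} k\binom{n}{k} + \frac{q}{2^n}\left(2^n-4\sum_{k=0}^{q-1}\binom{n}{k}\right).
\]
The second term contributes $q$ together with $-\tfrac{4q}{2^n}\sum_{k=0}^{q-1}\binom{n}{k}$, and combining the two sums over $0\le k<q$ yields
\[
E_n = q + \frac{4}{2^n}\sum_{k=0}^{q-1}(k-q)\binom{n}{k} = q - \frac{1}{2^{n-2}}\sum_{k=0}^{q-1}(q-k)\binom{n}{k},
\]
which is the claimed formula. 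This part is a routine algebraic rearrangement and presents no difficulty.

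The substance is the limiting statement, which I would phrase as $E_n - q \to 0$; that is, the correction term $\frac{1}{2^{n-2}}\sum_{k=0}^{q-1}(q-k)\binom{n}{k}$ vanishes as $n\to\infty$. Since $q-k\le q\le n/3$, the sum is bounded above by $q\sum_{k=0}^{q-1}\binom{n}{k}$. The crucial structural observation is that the summation range stops strictly below the median: because $q-1<n/2$, the binomial coefficients are increasing on $0\le k\le q-1$, so $\sum_{k=0}^{q-1}\binom{n}{k}\le q\binom{n}{q-1}\le q\binom{n}{\lfloor n/3\rfloor}$. Hence the correction term is at most
\[
\frac{4q^2}{2^n}\binom{n}{\lfloor n/3\rfloor} \le \frac{4n^2}{9}\cdot\frac{1}{2^n}\binom{n}{\lfloor n/3\rfloor},
\]
i.e. a polynomial in $n$ times $\binom{n}{\lfloor n/3\rfloor}/2^n$. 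The key estimate is then the standard entropy (or Stirling) bound $\binom{n}{\lfloor n/3\rfloor}\le 2^{nH(1/3)}$, where $H$ is the binary entropy function and $H(1/3)=\log_2 3 - \tfrac{2}{3}<1$; this shows $\binom{n}{\lfloor n/3\rfloor}/2^n$ decays exponentially in $n$. A polynomial factor cannot overcome exponential decay, so the correction term tends to $0$ and $E_n\to q=\lfloor n/3\rfloor$.

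There is no deep obstacle here; the only care required is controlling the partial binomial sum, and the essential point is that the range reaches only $q-1\approx n/3$, comfortably below $n/2$. This is exactly what makes the tail exponentially small relative to $2^n$: were the sum to extend to $n/2$, the decay would fail. I would therefore take care to invoke a clean, self-contained form of the tail bound---monotonicity of the binomials below the median together with the entropy estimate (equivalently, a Chernoff bound for a fair coin)---rather than an asymptotic Stirling expansion, so that the exponential-versus-polynomial comparison is transparent.
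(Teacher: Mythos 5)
Your proposal is correct and follows essentially the same route as the paper: the identical algebraic rearrangement of $\sum_k k\,p_{n,k}/2^n$, followed by bounding the correction term by a polynomial in $n$ times $\binom{n}{\lfloor n/3\rfloor}/2^n$. The only cosmetic difference is that you certify the exponential decay of $\binom{n}{\lfloor n/3\rfloor}/2^n$ via the entropy bound $2^{n(H(1/3)-1)}$, whereas the paper reaches the equivalent estimate $C_1\sqrt{n}\,3^n/(2^{5/3})^n$ by a direct Stirling computation.
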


\begin{proof}
Letting $E_n$ denote the expectation, we have 
\begin{align*}
 E_n &= \sum_{k=0}^q k\pr(n,k),\\
  &= \frac{1}{2^n}\sum_{k=0}^{q-1} 4k\binom{n}{k} + \frac{q}{2^n}\left[ 2^n - 4\sum_{k=0}^{q-1}\binom{n}{k}\right],\\
  &= q - \frac{4}{2^n} \sum_{k=0}^{q-1} (q-k)\binom{n}{k}.
\end{align*}
To show the claimed convergence, we use the very coarse bound of
\[
4\sum_{k=0}^{q-1} (q-k)\binom{n}{k} \leq 4\sum_{k=0}^{q-1} q\binom{n}{q} \leq 4q^2\binom{n}{q},
\]
and show $4q^2\binom{n}{q}/2^n \to 0$.

Using Stirling's approximation, $n!\approx \sqrt{2\pi n}(n/e)^n$, with $q\approx n/3$, we have 
\begin{align*}
 4q^2\binom{n}{q} &= 4q^2\frac{ n!}{q!(n-q)!}\\
 &\approx Cn^2\frac{ \sqrt{n}(n/e)^n}{\sqrt{n/3}(n/3e)^{n/3}\sqrt{2n/3}(2n/3e)^{2n/3}}\\
 &= C_1 \frac{n^{3/2}3^n}{(2^n)^{2/3}},
\end{align*}
for some constant $C_1$.
Dividing by $2^n$, we have
\[
4q^2\binom{n}{q}/2^n \approx C_1 \frac{\sqrt{n}3^n}{(2^{5/3})^n}.
\]
As $2^{5/3} \approx 3.17$, this expression, which is approximately $q-E_n$, converges to zero. 
\end{proof}

\section{Further questions}

There are many directions for further study, and we suggest two here.

\subsection{Uneven napkin preferences}

In \cite{CP, Eriksen, Sudbury}, the authors studied the distribution of napkinless diners (for the random seating order case) with the assumption that a diner reaches left with some fixed probability $p$ and right with probability $1-p$. As we have only considered the case of $p=1/2$, it would be interesting to similarly generalize the results here. (Likewise, \cite{APST} only considers $p=1/2$.) As $p\to 1$, the ma\^{i}tre d' has fewer opportunities to trap diners, so we know that $E_n/n \to 0$ as the diners are more aligned in their napkin preferences, but what exactly is the dependence on $p$? 

A closely related question is to compute the distribution of drift when the paths are distributed with an arbitrary binomial distribution. One interesting wrinkle is that two paths of length $n$ can have the same drift but a different number of $E$ steps, and hence a different probability. For example, paths $NNNENE$ and $NNNEEE$ both have drift 3. But if an $N$ step occurs with probability $p$, these two paths occur with probability $p^4(1-p)^2$ and $p^3(1-p)^3$, respectively.

\subsection{Graph-theoretic generalization}

The malicious ma\^{i}tre d' problem can be viewed as a graph theory problem played on a cycle graph with $2n$ vertices. If we color the vertices black (for diners) and white (for napkins) in alternating fashion, then each diner-napkin pair corresponds to an edge between a black vertex and a white vertex. A seating order results in a matching of the graph in which we label the black vertices with the diner number. Napkinless diners correspond to isolated vertices in the matching. The example in Figure \ref{fig:circle} would then be the following matching:
\[
\begin{tikzpicture}[scale=1,baseline=0]
x\node[draw=none,minimum size=4cm,regular polygon,regular polygon sides=8] (a) {};
\node[draw=none,minimum size=4cm,regular polygon,regular polygon sides=8] (c) {};
\foreach \x in {1,2,3,4,5,6,7,8}
\draw[thick] (a.side 1) -- (c.corner 2);
\draw[thick] (a.side 3) -- (c.corner 3);
\draw[thick] (a.side 4) -- (c.corner 4);
\draw[thick] (a.side 5) -- (c.corner 6);
\draw[thick] (a.side 6) -- (c.corner 7);
\draw[thick] (a.side 8) -- (c.corner 8);
\foreach \x in {1,2,3,4,5,6,7,8}
  {\fill[white] (c.corner \x) circle (3pt);
  \draw (c.corner \x) circle (3pt);
  }
\draw (a.side 1) node[circle,fill=black,inner sep=2] {} node[above] {$1$};
\draw (a.side 2) node[circle,fill=black,inner sep=2] {} node[above left] {$5$};
\draw (a.side 3) node[circle,fill=black,inner sep=2] {} node[left] {$2$};
\draw (a.side 4) node[circle,fill=black,inner sep=2] {} node[below left] {$8$};
\draw (a.side 5) node[circle,fill=black,inner sep=2] {} node[below] {$4$};
\draw (a.side 6) node[circle,fill=black,inner sep=2] {} node[below right] {$6$};
\draw (a.side 7) node[circle,fill=black,inner sep=2] {} node[right] {$7$};
\draw (a.side 8) node[circle,fill=black,inner sep=2] {} node[above right] {$3$};
\end{tikzpicture}
\]

With this framework, we can consider other bipartite graphs and suppose the ma\^{i}tre d' seats diners at black vertices one at a time. When seated, a diner is equally likely to select the napkin at any one of its neighboring white vertices, if available. How many napkinless diners (isolated black vertices) do we expect?

For example, suppose the graph below has napkins at white vertices and diners sit at black vertices:
\[
\begin{tikzpicture}
\draw (0,0)--(0,1)--(2,1)--(2,0)--(0,0);
\draw (1,0)--(1,1);
    \draw (0,0) node[inner sep=2,circle,fill=white,draw=black] {};
    \draw (1,1) node[inner sep=2,circle,fill=white,draw=black] {};
    \draw (2,0) node[inner sep=2,circle,fill=white,draw=black] {};
    \draw (1,0) node[inner sep=2,circle,fill=black] {};
    \draw (0,1) node[inner sep=2,circle,fill=black] {};
    \draw (2,1) node[inner sep=2,circle,fill=black] {};
\end{tikzpicture}
\]
Here are two possible seating outcomes:
\[
\begin{tikzpicture}[baseline=0]
\draw (0,0)--(0,1);
\draw (1,0)--(2,0);
\draw (1,1)--(2,1);
    \draw (0,0) node[inner sep=2,circle,fill=white,draw=black] {};
    \draw (1,1) node[inner sep=2,circle,fill=white,draw=black] {};
    \draw (2,0) node[inner sep=2,circle,fill=white,draw=black] {};
    \draw (1,0) node[inner sep=2,circle,fill=black] {};
    \draw (0,1) node[inner sep=2,circle,fill=black] {};
    \draw (2,1) node[inner sep=2,circle,fill=black] {};
    \draw (1,0) node[below] {$1$};
    \draw (0,1) node[above] {$2$};
    \draw (2,1) node[above] {$3$};
\end{tikzpicture}
\qquad \mbox{ and } \qquad
\begin{tikzpicture}[baseline=0]
\draw (0,0)--(1,0);
\draw (1,1)--(2,1);
    \draw (0,0) node[inner sep=2,circle,fill=white,draw=black] {};
    \draw (1,1) node[inner sep=2,circle,fill=white,draw=black] {};
    \draw (2,0) node[inner sep=2,circle,fill=white,draw=black] {};
    \draw (1,0) node[inner sep=2,circle,fill=black] {};
    \draw (0,1) node[inner sep=2,circle,fill=black] {};
    \draw (2,1) node[inner sep=2,circle,fill=black] {};
    \draw (1,0) node[below] {$1$};
    \draw (0,1) node[above] {$3$};
    \draw (2,1) node[above] {$2$};
\end{tikzpicture}
\]
The numbers indicate the order in which the diners were seated. We can see on the left that each diner receives a napkin, while on the right, diner 3 is napkinless.

Of course this question can be asked of various families of graphs, under a variety of assumptions about the ma\^{i}tre d' and the distribution of diner preferences. The case of path graphs is implicitly dealt with in  studying ``straight tables'' in \cite{APST, CP}.

\bigskip

\noindent\textbf{Acknowledgments.}
Work of Shirman was partially supported by a Research Project Assistantship from the Department of Mathematical Sciences at DePaul University. Work of Tenner was partially supported by NSF Grant DMS-2054436.

\end{document}